\documentclass[10pt]{amsart}
\usepackage{graphicx}
\usepackage{amsmath}
\usepackage{amsfonts}
\usepackage{amssymb}
\usepackage{amsthm}
\usepackage{verbatim}
\pagestyle{headings}
\renewcommand{\-}{-\!\!\!\!\!\!\!\hspace{0.4mm}}

\newcommand{\lap}{\Delta}
\newcommand{\e}{\varepsilon}
\newcommand{\R}{\mathbb{R}}
\newcommand{\dd}{\partial}
\begin{document}
\renewcommand{\rmdefault}{cmr}
\newtheorem{lem}{Lemma}
\newtheorem{cor}{Corollary}
\newtheorem{thm}{Theorem}
\newtheorem{ass}{Assumption}
\newtheorem{prop}{Proposition}
\newtheorem{definition}{Definition}
\newtheorem{con}{Conjecture}
\newtheorem{rem}{Remark}
\title{Optimal Regularity for the No-Sign Obstacle Problem}

\author{John Andersson, Erik Lindgren and Henrik Shahgholian}

\thanks{H. Shahgholian has been partially supported  by the Swedish Research Council.
This research was carried out during the program Free boundaries at MSRI, Spring 2011, with support from NSF, Simons Foundation, and Viterbi Foundation.}

\maketitle

\begin{abstract}\noindent
In this paper we prove the optimal  $C^{1,1}(B_\frac12)$-regularity for a general obstacle type problem
$$
\lap u = f\chi_{\{u\neq 0\}}\textup{ in $B_1$},
$$
under the assumption that $f*N$ is $C^{1,1}(B_1)$, where $N$ is the Newtonian potential. This is the weakest assumption for which one can hope to get $C^{1,1}$-regularity. As a by-product of the $C^{1,1}$-regularity we are able to prove that, under a standard thickness assumption on the zero set close to a free boundary point $x^0$, the free boundary is locally a $C^1$-graph close to $x^0$, provided $f$ is Dini. This completely settles the question of the optimal regularity of this problem, that has been under much attention during the last two decades. 
\end{abstract}
\section{Introduction}
Our purpose in  this paper is two-fold. First to introduce a robust technique inspired by J. Andersson, H. Shahgholian and G. S. Weiss in \cite{ASW10}, to handle regularity questions for free boundary problems in general. Second, we want to apply the technique to resolve the regularity issue for the so-called no-sign obstacle problem, with the weakest possible assumptions on the right hand side.

We say that $u$ solves the no-sign obstacle type problem if for given $f$ such that $f*N\in C^{1,1}(B_1)$, where $N$ is the Newtonian potential, and for a reasonable smooth $g$
\begin{equation}\label{main}
\left\{\begin{array}{ll}
\Delta u= f\chi_{\{u\ne 0\}} & \textrm{ in } B_1, \\
u=g & \textrm{ on }\partial B_1,
\end{array}\right.
\end{equation}
in a suitable weak sense, where $\chi_A$ is the characteristic function of the set $A$ defined as
$$
\chi_A(x)=\left\{
\begin{array}{ll}
1 & \textrm{ if }x\in A, \\
0 & \textrm{ if }x\notin A,
\end{array}\right. 
$$
and where $B_r=\{x:|x|<r\}$. Since we are interested in the local regularity of $u$, the behavior of $g$ is not essential for our purposes.
It is noteworthy that equation (\ref{main}) is not different from the standard way of writing
$$
\Delta u =\chi_{\Omega}, \qquad u=\nabla u =0 \hbox{ in } \Omega^c .
$$
We have chosen this way of writing, just for simplicity, of exposition.

It is clear that a solution to (\ref{main}) is in general never better
than $C^{1,1}$, even if $f\in C^\infty(B_1)$.
This follows from the fact that $\Delta u$ is, in general, discontinuous 
across the free boundary $\Gamma=\partial (\operatorname{interior}\{u\ne 0\}\cap B_1$). 

This problem has been given a great deal of attention in the last two decades, after the seminal work of Sakai in \cite{sak91}, where he completely resolves the case $f=1$ in two dimensions. Until now the regularity of the solution when $f$ is allowed to be merely Dini continuous (see Definition \ref{dini}) has been an open problem. The main contribution of the paper is that we prove that the solution is indeed $C^{1,1}$ even under this weak assumption and that, under the standard thickness assumption on the zero set, the free boundary is locally a $C^1$-graph. 

If $f\in L^p(B_1)$ for $p<\infty$ then we cannot hope to get better
regularity than $W^{2,p}$, which follows from standard Calderon-Zygmund theory
(see Theorem \ref{CZtheory} below).
Moreover, when $f\in L^\infty(B_1)$ then 
$f\chi_{\{u\ne 0\}}\in L^\infty(B_1)$ so that Calderon-Zygmund theory
implies that $u\in C^{1,\alpha}(B_{1/2})\cap W^{2,p}(B_{1/2})$ for
all $\alpha<1$ and $p<\infty$, but not for $\alpha=1$. Hence, it is clear that $f\in L^\infty(B_1)$ or even $f$ continuous is not strong enough to assure the $C^{1,1}$-regularity. The weakest possible assumption is clearly to ask the existence of $v\in C^{1,1}(B_1)$ so that $\lap v = f$. It is under this hypothesis (slightly weaker than the Dini condition), that we prove the $C^{1,1}$-regularity for the solution $u$ of \eqref{main}.

\subsection{Known result}
If one assumes that the solution $u$ is non-negative then it is classical that $u\in C^{1,1}(B_{1/2})$ if $f\in C^{\textrm{Dini}}$ ,
see for instance \cite{Blank} and \cite{CaffRev}. Moreover, in \cite{Blank} I. Blank proves that under a certain thickness assumption (see Theorem \ref{fbrthm}), the free boundary is a $C^1$-graph. That a function is $C^{\textrm{Dini}}$ is defined as follows.
\begin{definition} \label{dini}
We say that $f$ is Dini continuous, $f\in C^{Dini}(B_1)$, if there exist a 
modulus of continuity
(continuous and monotone non-negative function on $[0,1)$ that takes the value $0$ at the 
origin) $\sigma$ such that 
$$
|f(x)-f(y)|\le \sigma(|x-y|),
$$
for all $x,y\in B_1$ and
$$
\int_0^{1/2}\frac{\sigma(r)}{r}dr<\infty.
$$
\end{definition}
For the no-sign case and $f=1$,  it was proven by L.A. Caffarelli,
L. Karp and H. Shahgholian in \cite{CKS}, that solutions to (\ref{main}) are indeed
$C^{1,1}$ and that the free boundary is a $C^1$-graph close to free boundary points where the set $\{u=0\}$ is thick enough. This was later extended by 
H. Shahgholian
to cover the case when $f\in C^{0,1}$, see \cite{Sh}. The arguments in both
\cite{CKS} and \cite{Sh} are based on monotonicity formulas. These 
monotonicity formulas are probably optimal, in the sense that they cannot cover any weaker assumption on $f$, and it may therefore 
be difficult to improve on the regularity result in \cite{Sh} by using
the same methods. 

The best regularity result for the no sign obstacle problem is
due to A. Petrosyan and H. Shahgholian \cite{PS}, where they deduce
$C^{1,1}$ regularity of $u$ and the $C^1$ regularity of the free boundary, under a thickness assumption of the 
set $\{u=0\}$ (slightly stronger than the one in Theorem \ref{fbrthm}) when 
$f$ satisfies a double Dini condition:
that the modulus of continuity $\sigma$ of $f$ satisfies 
$$
\int_0^{1/2}\frac{\sigma(s)\ln(1/s)}{s}ds<\infty.
$$
In this article we will show the optimal regularity of solutions to the 
obstacle problem without any assumption on the sign nor do we need any 
a priori information on the set where $u=0$. Our main result is stated in the theorem below.
\begin{thm}[$C^{1,1}$-regularity]\label{mainthm}
Let $u$ be a solution to (\ref{main}) and assume furthermore that
$f= \Delta v$ where $v\in C^{1,1}(B_1)$ and that $g\in C(\partial B_1)$.
Then
$u\in C^{1,1}(B_{1/2})$ and
$$
\|D^2 u\|_{L^\infty(B_{1/2})}\le C\big( \|u\|_{L^1(B_1)}+\|D^2 v\|_{L^\infty(B_1)}\big),
$$
where $C$ depends on the dimension.
\end{thm}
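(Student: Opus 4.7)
The plan is a proof by contradiction via blow-up at a carefully selected worst scale, culminating in a Liouville-type rigidity for harmonic functions that admit harmonic-quadratic approximations at every scale.

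Since $v \in C^{1,1}(B_1)$, the assertion is equivalent to the analogous $C^{1,1}$-bound for $h := u - v$, which satisfies $\Delta h = -f\chi_{\{u=0\}}$ in $B_1$ with $\|\Delta h\|_{L^\infty(B_1)} \leq \|D^2 v\|_{L^\infty(B_1)}$. After normalizing $\|u\|_{L^1(B_1)} + \|D^2 v\|_{L^\infty(B_1)} \leq 1$, it suffices to bound
$$\Phi(h, x_0, r) := r^{-2}\inf_{P}\|h - P\|_{L^\infty(B_r(x_0))}$$
uniformly for $x_0 \in B_{1/2}$ and $r \in (0, 1/4)$, where the infimum ranges over harmonic polynomials $P$ of degree $\leq 2$. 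Assume the bound fails: there exist normalized $u_k$, points $x_k \in B_{1/2}$ and scales $r_k \to 0$ with $M_k := \Phi(h_k, x_k, r_k) \to \infty$. By a standard dyadic maximization we may further arrange the near-maximality property $\Phi(h_k, x, \rho) \leq 2 M_k$ for every $x \in B_{3/4}$ and $\rho \in [r_k, 1/4]$.

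Let $P_k$ be an almost-optimal polynomial at $(x_k, r_k)$, and rescale
\[
w_k(y) := \frac{h_k(x_k + r_k y) - P_k(x_k + r_k y)}{M_k r_k^2}, \qquad y \in B_{1/(4 r_k)}.
\]
Then $\|w_k\|_{L^\infty(B_1)} = 1$; the rescaled equation reads $\Delta w_k(y) = -M_k^{-1} f_k(x_k + r_k y)\chi_{\{u_k = 0\}}(x_k + r_k y)$, so $\|\Delta w_k\|_{L^\infty} \leq M_k^{-1}\|f_k\|_{L^\infty} \to 0$; and comparing $P_k$ with an almost-optimal $P_k^\rho$ at scale $\rho = R r_k$ produces harmonic quadratics $Q_R^k := (P_k^\rho - P_k)(x_k + r_k \cdot)/(M_k r_k^2)$ satisfying $\|w_k - Q_R^k\|_{L^\infty(B_R)} \leq 2 R^2$ for every $R \in [1, (4r_k)^{-1}]$. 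By Calderón--Zygmund and Schauder interior estimates together with Arzelà--Ascoli, along a subsequence $w_k \to w$ locally uniformly in $\R^n$, with $\Delta w = 0$ in $\R^n$ and $\|w\|_{L^\infty(B_1)} = 1$; for each $R \geq 1$ there is a harmonic quadratic $Q_R$ with $\|w - Q_R\|_{L^\infty(B_R)} \leq 2 R^2$, and the optimality of $P_k$ passes to the limit as $\|w\|_{L^\infty(B_1)} \leq \|w - Q\|_{L^\infty(B_1)}$ for every harmonic quadratic $Q$.

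To conclude, by Liouville $w$ is a harmonic polynomial, decomposing uniquely as $w = \sum_{j \geq 0} w_j$ into homogeneous harmonic polynomials. The $L^2(\partial B_\rho)$-orthogonality of distinct-degree harmonic polynomials, combined with $\|w - Q_\rho\|_{L^\infty(\partial B_\rho)} \leq 2 \rho^2$, gives $\|w_j\|_{L^2(\partial B_1)} \leq C \rho^{2-j}$ for every $j \geq 3$; letting $\rho \to \infty$ forces $w_j \equiv 0$ for $j \geq 3$. Thus $w$ is itself a harmonic quadratic, and the limiting optimality relation applied with $Q = w$ forces $\|w\|_{L^\infty(B_1)} = 0$, contradicting $\|w\|_{L^\infty(B_1)} = 1$. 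The main technical obstacle is the dyadic near-maximality selection and the scale-to-scale comparison of optimal polynomials; these must be executed precisely enough to upgrade the naive quadratic growth of $w$ to the stronger condition that $w$ is uniformly close to a harmonic quadratic at \emph{every} scale, which is what permits the spherical-harmonic rigidity to eliminate all polynomial degrees $\geq 3$.
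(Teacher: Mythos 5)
There is a genuine gap, and it is at the very first step: the reduction of the theorem to a uniform bound on $\Phi(h,x_0,r)=r^{-2}\inf_P\|h-P\|_{L^\infty(B_r(x_0))}$. Boundedness of $\Phi$ is the Campanato-type characterization of $D^2h$ being (essentially) in $BMO$, not in $L^\infty$: it permits the optimal quadratics $P_r$ to drift logarithmically as $r\to 0$. Concretely, $w(x)=(x_1^2-x_2^2)\log|x|$ in $\R^2$ has bounded Laplacian, satisfies $\Phi(w,x_0,r)\le C$ for all $x_0$ and $r$ (take $P_r=(x_1^2-x_2^2)\log r$ at the origin), yet $D^2w\notin L^\infty$ near $0$. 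Worse, your contradiction hypothesis $M_k\to\infty$ can never occur: $\Phi\le C$ already follows from the Calder\'on--Zygmund $BMO$ estimate (this is essentially Lemma \ref{BMOLem} of the paper), so your blow-up argument, even if every technical step were carried out, would only re-prove a fact that is both automatic and insufficient. A symptom of the problem is that your proof uses no property of $h=u-v$ beyond $\|\Delta h\|_{L^\infty}\le C$ and an $L^1$ bound, and under those hypotheses alone the conclusion is false, as the example above shows.

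What is missing is precisely the mechanism that prevents the logarithmic drift of the second-order approximations, and for that one must use the structure of the equation, namely that $u$ (hence $D^2u$, a.e.) vanishes on $\Lambda=\{u=0\}$, a set which has positive density in $B_r(x^0)$ for free boundary points $x^0$. The quantity that must be bounded is not the approximation error $\Phi$ but the size $S(u,r,x^0)$ of the approximating harmonic quadratic itself. The paper's key step (Proposition \ref{Mainprop}) is a dichotomy: if $S(u,r,x^0)$ is large, then since $D^2u=0$ on $\Lambda$ while $u$ minus its quadratic projection is controlled in $L^2$, the density $\lambda_r=|\Lambda\cap B_r|/|B_r|$ must decay geometrically from scale $r$ to $r/2$; combined with Lemma \ref{measuregaintoprojgain}, which bounds the change of the projection across one dyadic scale by $C\lambda_r^{1/2}$, the telescoping sum of projection increments converges and $S$ stays bounded. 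Your scheme has no analogue of this step. If you wish to salvage a blow-up proof, the contradiction hypothesis must be $S(u,r_k,x_k)\to\infty$ (or $|(D^2u)_{r_k,x_k}|\to\infty$), and the limit analysis must exploit that the rescaled zero sets carry definite measure on which the blow-up limit's Hessian vanishes; this is where the actual difficulty of the theorem lives.
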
 
It is easy to see that if $f\in C^{\textrm{Dini}}(B_1)$ then our assumptions
are satisfied. Applying standard methods we obtain as a direct corollary the following regularity result on the free boundary which matches the 
result known for the case $f=1$ in \cite{CKS}.
\begin{thm}[Regularity of the free boundary]\label{fbrthm} Let $u$ be a solution to (\ref{main}) and assume in addition that
$f\in C^{\textup{Dini}}(B_1)$ and $f(0)=1$. Then there is a modulus of continuity $\sigma$ and $r_0>0$ such that if
$$
\frac{\operatorname{MD}(\{u=0\}\cap B_r)}{r}>\sigma(r),
$$ 
for some $r<r_0$ then for some $\rho>0$, $\partial( \operatorname{interior} \{u\neq 0\}\cap B_\rho)$ is a $C^1$-graph. Here $\operatorname{MD}$ stands for the minimal diameter.
\end{thm}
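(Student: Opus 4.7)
The plan is to combine the $C^{1,1}$ regularity from Theorem \ref{mainthm} with the standard blow-up and iteration scheme of Caffarelli--Karp--Shahgholian \cite{CKS}, using the Dini condition on $f$ to control the perturbation from the model problem $f\equiv 1$. Since Theorem \ref{mainthm} provides the essential quantitative input (the optimal $C^{1,1}$ bound), the rest reduces to routine adaptations of the arguments that are available once optimal regularity is in hand.

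First I would invoke Theorem \ref{mainthm} to obtain $u\in C^{1,1}(B_{1/2})$, so that the rescalings $u_r(x):=u(rx)/r^2$ at the origin form a precompact family in $C^{1,\alpha}_{\textrm{loc}}(\mathbb{R}^n)$. Any blow-up limit $u_0=\lim_{k\to\infty} u_{r_k}$ solves $\Delta u_0=\chi_{\{u_0\ne 0\}}$ on all of $\mathbb{R}^n$ with quadratic growth, because the Dini modulus $\sigma$ of $f$ gives $\|f(r_k\cdot)-f(0)\|_{L^\infty(B_R)}\to 0$ for each $R$ and $f(0)=1$ by hypothesis.

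Second, I would transfer the single-scale thickness hypothesis at $0$ to thickness of every blow-up limit. The non-degeneracy estimate $\sup_{B_\rho(y)}|u|\ge c\rho^2$ at free boundary points $y$ near $0$ (which follows from $f(0)=1$ together with a standard barrier argument) combined with the assumption $\operatorname{MD}(\{u=0\}\cap B_r)/r>\sigma(r)$ propagates to comparable scales, so that in the limit $\operatorname{MD}(\{u_0=0\}\cap B_1)\ge c>0$. By the classification of thick global solutions in \cite{CKS}, any such $u_0$ must be a half-space solution $u_0(x)=\tfrac12\bigl((x\cdot\nu)_+\bigr)^2$ for some unit vector $\nu$. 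Running the standard iteration then concludes $C^1$ regularity: comparing $u$ with shifted half-space solutions and using the Dini modulus to control the error yields $\partial_e u\ge 0$ in a cone of directions around $\nu$, which gives a Lipschitz graph representation of the free boundary, and iterating on dyadic scales together with the integrability $\int_0^{1/2}\sigma(r)/r\,dr<\infty$ upgrades Lipschitz to $C^1$ with an explicit modulus of continuity for the normal $\nu(y)$.

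The main obstacle is the second step, namely transferring the one-scale thickness hypothesis to the global blow-up under a merely Dini right-hand side. For $f\equiv 1$ this is precisely the content of \cite{CKS}; here one must verify that the Dini perturbation neither destroys the non-degeneracy estimate nor spoils the continuity of the minimal diameter under rescaling. Both become routine once one has the quantitative $C^{1,1}$ bound from Theorem \ref{mainthm}, because the diameter of any set $E\cap B_r$ is stable under uniform convergence of the defining function $u$, and on bounded scales the approximation of $f$ by $f(0)=1$ is controlled by the Dini modulus.
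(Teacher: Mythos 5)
There is a genuine gap, and it sits exactly where the no-sign structure of the problem bites. Your plan blows up only at the origin, classifies the limit as a half-space solution, and then runs ``the standard iteration'' ($\partial_e u\ge 0$ in a cone of directions, Lipschitz graph, upgrade to $C^1$). But that iteration is the regularity theory for the \emph{signed} obstacle problem; for a sign-changing $u$ the known route to directional monotonicity goes through the Alt--Caffarelli--Friedman monotonicity formula applied to $(\partial_e u)^{\pm}$, which is precisely the tool that forces $f$ to be Lipschitz (as in \cite{CKS}, \cite{Sh}) or $\log$-Dini (as in \cite{PS}). Declaring this step ``routine once one has the quantitative $C^{1,1}$ bound'' hides the actual obstruction: the $C^{1,1}$ bound of Theorem \ref{mainthm} controls compactness of rescalings but says nothing about the sign of $u$ or of $\partial_e u$ near the origin. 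The paper avoids this entirely by first proving, as a separate step, that $u\ge 0$ in $B_{r/2}$, and only then citing Blank's Theorem 7.2 for the classical obstacle problem with Dini right-hand side; your proposal never establishes nonnegativity and therefore cannot legitimately hand off to (or reproduce) that theory.

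The second missing ingredient is the Weiss monotonicity formula (Proposition \ref{weissprop}). To rule out $u(x^j)<0$ at points $x^j\to 0$ one must blow up at the \emph{nearby} free boundary points $y^j$ realizing $\operatorname{dist}(x^j,\Gamma)$, at the scales $t_j=\operatorname{dist}(x^j,\Gamma)$ --- not at the origin. The thickness hypothesis, which holds only at the origin and at a single scale $r$, gives no direct control there. The paper uses the thickness to force $W(u,0^+,0)=A_n$, then the upper semicontinuity of $W$ in the base point to propagate $W(u,0^+,y)=A_n$ to nearby $y\in\Gamma$, and finally $W(u,t_j,y^j)\le (3/2)A_n$ to guarantee that the blow-up at $y^j$ is a \emph{non-polynomial} global solution, so that the classification theorem (Theorem II in \cite{CKS}) yields convexity of its zero set and a contradiction with $v_0(z^0)\le 0$. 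Without an energy quantity that is monotone in $r$ and upper semicontinuous in the base point, you cannot exclude that the blow-up at $y^j$ is a polynomial solution (zero set of measure zero), and the argument collapses. A correct write-up therefore needs both the reduction to $u\ge 0$ and the Weiss-type monotonicity formula for Dini $f$ (which the paper supplies in the Appendix by adapting \cite{PS}); your final paragraph identifies the transfer of thickness as the main obstacle, but the real obstacle is the sign, and it is not resolved by uniform convergence of $u$ under rescaling.
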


\begin{rem} In fact, all the arguments used to prove Theorem \ref{mainthm} work perfectly fine also for the case
$$
\lap u=f\chi_{\{|\nabla u|\neq 0\}}.
$$
The only place where one needs to be careful is in the proof of Proposition \ref{Mainprop}, where we claim that $D^2 u =0$ a.e. in the set $\{u=0\}$. 
But clearly, we also have $D^2u = 0$ a.e. in the set $\{|\nabla u|=0\}$. Moreover, Theorem \ref{fbrthm} also remains true in this case,
under the stronger assumption that
$$
\liminf_{\stackrel{x\to 0}{r\to 0}}\frac{\operatorname{MD}(\{u=0\}\cap B_r(x))}{r}>0.
$$
Since this quantity is stable with respect to perturbations, 
there is no longer any need of a Weiss type monotonicity formula. Therefore, one can, with arguments similar to those in the proof of 
Theorem \ref{fbrthm}, prove that the solution must be non-negative in a small neighborhood of the origin. Then the problem reduces to the obstacle
problem and the regularity result in \cite{Blank} applies.
\end{rem}

\begin{rem} There is nothing in these methods that restricts us to consider the Laplace equation and we believe that
we could derive Theorem \ref{mainthm} for more general linear operators. In particular we could consider
\begin{equation}\label{DiniCoeff}
Lu \equiv \partial_j \big( a^{ij}(x)\partial_i u \big)= f(x)\chi_{\{u\ne 0\}}
\end{equation}
for $a^{ij}\in C^{Dini}$ satisfying standard ellipticity conditions and $f$ as in Theorem \ref{mainthm}. In \cite{GruWi} estimates for the Green's potential for (\ref{DiniCoeff}) are derived and in \cite{Christ} BMO estimates for general kernels are proved. In \cite{Christ} the BMO estimates are proved only for H\"older kernels. Also, one would need to slightly refine the analysis in \cite{GruWi} in order to directly apply the Calderon-Zygmund theory as developed in \cite{Christ}. It would take us too far afield to reprove the results in \cite{GruWi} and \cite{Christ} in a form useful for our purposes. And for the sake of brevity and simplicity we have not attempted to pursue the greater generality. We believe, however, that such an extension of the theory is quite standard.
\end{rem}

\subsection{Notation and assumptions} Throughout the whole paper we will by $B_r(x_0)$ denote the ball of radius $r$ centered at the point $x_0$, 
and when there is no possible confusion we will write $B_r(0)=B_r$. Furthermore, we will use the notation $\Omega=\Omega(u)=\{u\ne 0\}\cap B_1$, 
for the free boundary $\Gamma=\Gamma(u)
= \partial (\Omega(u)^\circ) $. We also define the set \mbox{$\Lambda=\{u=0\}\cap B_1$}.
The  $BMO$ space is defined in Section \ref{BMOdef} and
the function $S(u,r,x^0)$ and the polynomial $p_{u,r,x^0}$ are defined in
Definition \ref{defofPi}. It is also convenient to introduce the notation
$$
\lambda_r = \frac{|\Lambda\cap B_r|}{|B_r|},
$$
and
$$
\delta(u,r)=\frac{\operatorname{MD}(\Lambda\cap B_r)}{r}, 
$$
since these quantities, as in many other free boundary problems, plays a central role. Finally, in the whole of this paper we will work under the following assumption:
\begin{ass}
In all of this paper, as in Theorem \ref{mainthm}, $f$ will be a function such that there is $v\in C^{1,1}(B_1)$ with $f=\lap v$, in the weak sense.
\end{ass}
As mentioned earlier, the assumption is fullfilled if for instance $f\in C^{\textup{Dini}}(B_1)$ (see Theorem 4.6 on page 60 and Problem 4.2 on page 70 in \cite{GT}).

We will also be somewhat sloppy with dimensional constants and we will in 
general denote all constants depending only on the dimension by $C$. At times 
we will indicate the dependence on another parameter and write $C_{p,n}$. We will also on occasion put a marker $C_0, C_1$ etc. on a constant in order to 
clarify some points, but the same constant may appear later in the paper 
without the subscript.

\section{Discussion and background}
Before we informally describe our main result let us recall the definition of $BMO$ spaces and state the Calderon-Zygmund estimates for
the Laplace equation.
\begin{definition}\label{BMOdef}
We say that a function $f\in L^2(\Omega)$ is in $BMO(\Omega)$ if
$$
\|f\|_{BMO(\Omega)}^2\equiv \sup_{x\in \Omega,r>0}
\frac{1}{r^n}\int_{B_r(x)\cap \Omega} |f(y)-(f)_{r,x}|^2+\|f\|_{L^2(\Omega)}^2<\infty,
$$
where $(f)_{r,x}$ is the average of $f$ in $B_r(x)\cap \Omega$.
\end{definition}
Next we state the Calderon-Zygmund estimates, for a proof see Theorem 9.11 on page 235 in \cite{GT}
and 6.3a on page 178 in \cite{SteinHar}.

\begin{thm}\label{CZtheory}
Let $\Delta w= f$ in $B_R$.
\begin{enumerate}
\item If $f\in L^p(B_R)$ for $1<p<\infty$ then
$$
\|D^2 w\|_{L^p(B_{R/2})}< C_{p,n}\big( \|f\|_{L^p(B_R)}+\|w\|_{L^1(B_R)}\big).
$$
\item\label{CZBMO} If $f\in L^\infty(B_R)$ then 
$$
\|D^2 w\|_{BMO(B_{R/2})}< C_{\infty,n}\big( \|f\|_{L^\infty(B_R)}+\|w\|_{L^1(B_R)}\big).
$$
\end{enumerate}
Here the constants depend only on $p$ and the space dimension.
\end{thm}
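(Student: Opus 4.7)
The plan is to prove both parts via Calderon-Zygmund theory for the singular integral operators associated with the second derivatives of the Newtonian potential $N$. First, I would reduce to studying convolution with $N$ by decomposing $w = N*(f\chi_{B_R}) + h$, where $h$ is harmonic in $B_R$; the interior estimate $\|D^2 h\|_{L^\infty(B_{R/2})} \leq C R^{-n-2} \|h\|_{L^1(B_R)}$ handles the harmonic piece, and $\|h\|_{L^1(B_R)}$ is controlled by $\|w\|_{L^1(B_R)}$ together with the $L^1$-norm of $N*(f\chi_{B_R})$. The main object to analyze is thus $T_{ij} f := \partial_{ij}(N*f)$, given (away from the support of $f$) by integration against the kernel $K_{ij}(x) = \partial_{ij} N(x)$. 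This kernel is homogeneous of degree $-n$, smooth off the origin, and has zero mean on spheres---a standard Calderon-Zygmund kernel---while the diagonal terms $K_{ii}$ contribute an additional Dirac mass consistent with $\Delta(N*f) = f$.

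For part (1), the $L^p$-boundedness of $T_{ij}$ follows from the classical scheme: $L^2$-boundedness via Plancherel, since the Fourier multiplier $\xi_i \xi_j / |\xi|^2$ is bounded; weak-type $(1,1)$ via the Calderon-Zygmund decomposition at level $\lambda$, splitting $f$ into a good part (bounded by $C\lambda$, controlled by the $L^2$ estimate) and a bad part supported on a disjoint family of dyadic cubes with zero cube-mean (handled via the H\"ormander smoothness condition $\int_{|x|>2|y|} |K_{ij}(x-y)-K_{ij}(x)|\, dx \leq C$); Marcinkiewicz interpolation for $1 < p \leq 2$; and duality, using that $T_{ij}$ is essentially self-adjoint, for $p > 2$. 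For part (2), given a ball $B_\rho(x_0) \subset B_{R/2}$, I would split $f = f_1 + f_2$ with $f_1 = f\chi_{B_{2\rho}(x_0)}$. The local part is controlled via the $L^2$-bound just established,
$$
\int_{B_\rho(x_0)} |T_{ij} f_1|^2 \leq C \|f\|_{L^\infty}^2 |B_{2\rho}|,
$$
contributing at most $C\|f\|_{L^\infty}$ to the BMO seminorm over $B_\rho(x_0)$. For the far part, $T_{ij} f_2$ is smooth on $B_\rho(x_0)$ and, using $|\nabla_y K_{ij}(y)| \leq C|y|^{-n-1}$, one obtains
$$
|T_{ij} f_2(x) - T_{ij} f_2(x_0)| \leq C \|f\|_{L^\infty} \int_{|y-x_0|>2\rho} \frac{\rho\, dy}{|y-x_0|^{n+1}} \leq C \|f\|_{L^\infty}
$$
for all $x \in B_\rho(x_0)$. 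Summing these two estimates yields the desired BMO bound.

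The step I expect to be the main obstacle is the weak-type $(1,1)$ bound, where one must carefully execute the Calderon-Zygmund decomposition and exploit the H\"ormander condition on $K_{ij}$ to control the action of $T_{ij}$ on the bad part outside the bad set. This is the technical heart of the classical theory and is the reason the result is customarily cited from Gilbarg-Trudinger or Stein rather than reproved. Everything else---the harmonic splitting, Plancherel, Marcinkiewicz interpolation, duality, and the far-field kernel estimate---is essentially routine once this piece is in hand.
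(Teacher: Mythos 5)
Your outline is correct and is precisely the classical Calderon--Zygmund argument that the paper does not reprove but cites (Theorem 9.11 in Gilbarg--Trudinger for the $L^p$ part and \S 6.3a in Stein for the BMO part): harmonic splitting, $L^2$ via Plancherel, weak $(1,1)$ via the Calderon--Zygmund decomposition and the H\"ormander condition, interpolation and duality, and the local/far splitting for BMO. The only detail worth adding is that the paper's $BMO$ norm (Definition \ref{BMOdef}) includes the $L^2(B_{R/2})$ norm as a summand, so part (2) also needs the $p=2$ case of part (1) to control that term; this is immediate from what you have.
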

Our main idea is to exploit the limiting case in the Calderon-Zygmund theory.
Since $|\Delta u|\le C$ we know from \ref{CZBMO} in Theorem \ref{CZtheory}, that if $u\notin C^{1,1}$
then the singularity of $u$ comes from the average of the second derivatives.
If $(D^2 u)_{r,x^0}$ would be bounded then part 
\ref{CZBMO} in Theorem \ref{CZtheory} implies that the second derivatives of $u$ are bounded,
at least heuristically. 


We also know that $u=0$, and thus $D^2 u=0$, on some open set close to the
free boundary. This implies that if $u$ is not $C^{1,1}$ then
$u(x)-x\cdot (D^2 u)_{r,0}\cdot x$ has to be very large
on the zero set of $u$ close to a singular point. But the Laplacian
of $u(x)-x\cdot (D^2 u)_{r,0}\cdot x-v$ will be bounded in
$L^2$ by Theorem \ref{CZtheory}, if $v\in C^{1,1}$. This $L^2$
bound will result in an $L^\infty$ bound on $(D^2 u)_{r,0}$. The details are spelled out in the next sections.


\section{Preliminaries}
In this section we have gathered most of the technical lemmas needed 
in order to prove the main theorem. Most material in this section
is fairly trivial, although somewhat technical. The main idea and
the heart of the paper is Proposition \ref{Mainprop} in Section \ref{secproof} - the rest is no more
than trivial supporting technicalities. 
\begin{definition}\label{defofPi}
Let $\Pi(u,r,x^0)$ be the projection of $u$ into the space
of homogeneous harmonic polynomials in $B_r(x^0)$. In other words
$$
\int_{B_r(x^0)}\big| D^2u(x)-D^2\Pi(u,r,x^0)\Big|^2=\inf_{p\in \mathcal{P}_2}
\int_{B_r(x^0)}\big| D^2u(x)-D^2p(x)\big|^2,
$$
where $\mathcal{P}_2$ is the space of homogeneous harmonic polynomials of
second order and
$$
\big|D^2 p\big|^2=\sum_{i,j} \big|D^2_{i,j} p\big|^2.
$$
Moreover, we will use the notation
$$
\Pi(u,r,x^0)=S(u,r,x^0)p_{u,r,x^0}(x),
$$
where $p_{u,r,x^0}(x)$ is a second order homogeneous harmonic polynomial such that 

$$
\|D^2 p_{u,r,x^0}\|_{L^\infty(B_1)}=\sup_{B_1}|D^2 p_{u,r,x^0}|=1,
$$
and $S(u,r,x^0)\in \mathbb{R}_+$. 
\end{definition}
The following properties hold true for the projection.
\begin{lem}\label{trivialPiprop} Let $u$ be as in Theorem \ref{main}. Then
\begin{enumerate}
\item $\Pi(u,r,x^0)$ is well defined,
\item $\Pi(\cdot,r,x^0)$ is linear,
\item if $h$ is harmonic in $B_R(x^0)$ and $s,r<R$ then $\Pi(h,r,x^0)=\Pi(h,s,x^0)$,
\item $\|\Pi(u,r,x^0)\|_{L^2(B_1)}\le C\|D^2 u\|_{L^2(B_1)}$ for $r\in [\frac12,1]$,
\item $\|\Pi(u,r,x^0)\|_{L^\infty(B_1)}\le C\|D^2 u\|_{L^\infty(B_1)}$ for all $r\in (0,1)$.
\end{enumerate}
In the above, $C$ is a constant depending on the dimension.
\end{lem}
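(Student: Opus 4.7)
The plan is to identify $\mathcal{P}_2$ with the space $\mathcal{S}_0$ of symmetric traceless $n\times n$ matrices via the bijection $p\mapsto D^2 p$: a homogeneous quadratic is determined by its (constant) Hessian, and harmonicity is equivalent to that Hessian being traceless. Under this identification, the minimisation in Definition \ref{defofPi} becomes
$$\min_{A\in\mathcal{S}_0}\int_{B_r(x^0)}|D^2u(x)-A|^2\,dx,$$
the orthogonal projection of $D^2u$ onto a finite-dimensional subspace of constant matrix fields. Existence and uniqueness in (1) and linearity in (2) are immediate from standard Hilbert-space projection theory, and the Euler--Lagrange condition for the minimiser $A:=D^2\Pi(u,r,x^0)$ reads
$$\int_{B_r(x^0)}(D^2u-A):M\,dx=0\qquad\text{for every }M\in\mathcal{S}_0,$$
which says precisely that $A$ is the traceless part of the averaged Hessian $\bar M_r:=|B_r(x^0)|^{-1}\int_{B_r(x^0)}D^2u\,dx$. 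Since orthogonal projection onto a subspace is norm-decreasing, $|A|\le|\bar M_r|$.

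For (3), if $h$ is harmonic in $B_R(x^0)$ then each component $\partial_{ij}h$ is harmonic too, so the mean value property gives $\bar M_r=D^2h(x^0)$. This matrix is already traceless, so $A=D^2h(x^0)$ independently of $r\in(0,R)$, and (3) follows.

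For (4) and (5), since $\Pi(u,r,x^0)$ is a homogeneous harmonic quadratic with Hessian $A$, we have $\|\Pi(u,r,x^0)\|_{L^\infty(B_1)}\le C|A|$ (because the polynomial factor $|x|^2$, or equivalently $|x-x^0|^2$ under the natural domain assumption $B_r(x^0)\subset B_1$, is bounded on $B_1$). For (4), Cauchy--Schwarz gives
$$|A|\le|\bar M_r|\le\frac{1}{|B_r|^{1/2}}\|D^2u\|_{L^2(B_r(x^0))}\le C\|D^2u\|_{L^2(B_1)}$$
uniformly in $r\in[\tfrac12,1]$, proving (4). For (5), the trivial bound $|A|\le\|D^2u\|_{L^\infty(B_1)}$ is valid for every $r\in(0,1)$, yielding (5).

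The only step requiring any real care is the Euler--Lagrange identification of $A$ with the traceless part of $\bar M_r$; everything else is routine harmonic mean value, Cauchy--Schwarz, and the boundedness of the polynomial factor on $B_1$.
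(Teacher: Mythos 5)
Your proof is correct, and it is essentially a self-contained expansion of what the paper outsources: the paper disposes of properties (1)--(3) by citing Lemma 4.3 of \cite{ASW10} and of (4)--(5) by invoking ``the Poincar\'e inequality''. Your route through the identification $p\mapsto D^2p$ of $\mathcal{P}_2$ with traceless symmetric matrices, and the resulting explicit formula for $D^2\Pi(u,r,x^0)$ as the traceless part of the averaged Hessian $\bar M_r$, is the natural way to prove (1)--(3) and is surely what the cited lemma amounts to. Where you genuinely diverge is in (4)--(5): instead of a Poincar\'e-type inequality (which would control $\|\Pi\|_{L^2(B_1)}$ by $\|D^2\Pi\|_{L^2}$ via norm equivalence on the finite-dimensional space $\mathcal{P}_2$), you bound the constant matrix $|A|$ directly by Cauchy--Schwarz, respectively Jensen, and then use that a quadratic with Hessian $A$ is pointwise controlled by $|A|$ on $B_1$. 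This is more elementary and makes the dimensional constants transparent; it also makes visible the one genuine imprecision in the lemma's statement, namely that one needs $B_r(x^0)\subset B_1$ (or at least $\|D^2u\|$ taken on a slightly larger ball) for the final inequalities, which you correctly flag and which is harmless in the paper's applications since the lemma is always applied to rescaled functions centered at the origin. The Euler--Lagrange step is also done correctly: since $\bar M_r$ is symmetric, the orthogonality of $\bar M_r-A$ to all of $\mathcal{S}_0$ forces $\bar M_r-A\in\mathbb{R}I$, hence $A$ is the traceless part of $\bar M_r$ and $|A|\le|\bar M_r|$.
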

\begin{proof} Properties 1-3 are contained in Lemma 4.3 in \cite{ASW10}. The last two properties are consequences of the Poincar\'e inequality. \end{proof}


\begin{lem}\label{BMOLem}
Let $u\in W^{1,2}(B_1)$ be a solution to (\ref{main}). Then for every $x^0\in \Gamma\cap B_{1/2}$ and $r<\frac14$, the following inequality holds
$$
\Big\|D^2\Big(\frac{u(rx+x^0)}{r^2}-\Pi(u,r,x^0)\Big) \Big\|_{L^2(B_1)}\le C\big( \|u\|_{L^1(B_1)}+\|\Delta u\|_{L^\infty(B_1)}\big),
$$
where $C$ depends on the dimension.
\end{lem}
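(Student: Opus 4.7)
The plan is to reduce the estimate to a direct application of the BMO Calderón–Zygmund bound from Theorem \ref{CZtheory}, part \ref{CZBMO}. Since $\Delta u = f\chi_{\{u\ne 0\}}$ is bounded by $\|\Delta u\|_{L^\infty(B_1)}$, standard interior elliptic theory (a shrinking-ball variant of Theorem \ref{CZtheory}, obtained by a covering argument) yields
$$
\|D^2 u\|_{BMO(B_{3/4})} \le C\big(\|\Delta u\|_{L^\infty(B_1)} + \|u\|_{L^1(B_1)}\big).
$$
For $x^0 \in B_{1/2}$ and $r<1/4$ the ball $B_r(x^0)$ sits inside $B_{3/4}$, so by the very definition of the BMO seminorm,
$$
\frac{1}{r^n}\int_{B_r(x^0)} \big|D^2 u - (D^2 u)_{B_r(x^0)}\big|^2 \le C\big(\|\Delta u\|_{L^\infty(B_1)}^2 + \|u\|_{L^1(B_1)}^2\big).
$$

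Next I would unpack the projection $\Pi(u,r,x^0)$. A homogeneous harmonic quadratic polynomial corresponds to a symmetric traceless matrix $M$ via $p(y)=\tfrac12(y-x^0)^{\top} M(y-x^0)$, so minimizing $\int_{B_r(x^0)}|D^2u-M|^2$ over such matrices gives $M^\ast := D^2\Pi(u,r,x^0) = A - \tfrac{\operatorname{tr}A}{n}I$, where $A:=(D^2u)_{B_r(x^0)}$. A routine orthogonality calculation (the deviation $D^2u-A$ has zero mean over $B_r(x^0)$) then gives
$$
\int_{B_r(x^0)} |D^2u - M^\ast|^2 = \int_{B_r(x^0)} |D^2u - A|^2 + \frac{|B_r|}{n}\big((\Delta u)_{B_r(x^0)}\big)^2.
$$
The first term on the right is controlled by $|B_r|\cdot \|D^2u\|_{BMO(B_{3/4})}^2$ via the estimate above, while the second is trivially bounded by $|B_r|\,\|\Delta u\|_{L^\infty(B_1)}^2/n$.

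Finally I would undo the scaling. With $y=rx+x^0$, one has $D^2_x\big(u(rx+x^0)/r^2\big) = (D^2u)(rx+x^0)$, and, reading $\Pi(u,r,x^0)$ as a polynomial in the scaled variable $x$ in accordance with Definition \ref{defofPi}, $D^2_x\Pi(u,r,x^0) = M^\ast$. Hence
$$
\left\|D^2\!\left(\frac{u(rx+x^0)}{r^2}-\Pi(u,r,x^0)\right)\right\|_{L^2(B_1)}^2 = r^{-n}\int_{B_r(x^0)} |D^2u - M^\ast|^2,
$$
and the factor $|B_r|/r^n = |B_1|$ renders the resulting estimate $r$-independent, yielding the lemma after taking square roots. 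The proof is essentially bookkeeping; the only mildly subtle points are the scaling convention (one must check no stray powers of $r$ appear in $M^\ast$) and the standard upgrade of the Calderón–Zygmund BMO estimate from $B_{1/2}$ to $B_{3/4}$ so that all admissible balls $B_r(x^0)$ are contained in it, neither of which is a genuine obstacle.
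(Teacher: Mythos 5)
Your proof is correct and follows essentially the same route as the paper: both apply the BMO Calder\'on--Zygmund estimate on balls $B_r(x^0)\subset B_{3/4}$ and compare $D^2u$ with the traceless part of its average over $B_r(x^0)$, which is (as you verify explicitly and the paper only implicitly, via the minimality of $\Pi$) the Hessian of the optimal harmonic quadratic. The only cosmetic difference is that you compute the exact minimizer and the Pythagorean identity, whereas the paper simply bounds the infimum by testing with the polynomial $q_{x^0,r}$ and the triangle inequality.
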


\begin{proof} From the second part in Theorem \ref{CZtheory} it follows that
\begin{equation}\label{ivar}
\big\|D^2 u-(D^2 u)_{r,x^0}\|_{L^2(B_r(x^0))}\le 
C\big( \|u\|_{L^1(B_1)}+\|\Delta u\|_{L^\infty}(B_1)\big)r^{n/2}.
\end{equation}
We also observe that
$$
\-\int_{B_r(x^0)}D^2 u= \-\int_{B_r(x^0)}\left(\left(D^2 u-\frac{\lap u}{n}I\right) +\frac{\lap u}{n}I\right)=M(x^ 0,r)+\-\int_{B_r(x^0)}\frac{\lap u}{n}I, 
$$
where $M(x^0,r)$ is a constant matrix with zero trace and $I$ the identity matrix. Thus, if
$$
q_{x^0,r}=\frac12x^T M(x^0,r) x,
$$
then
$$
D^2q_{x^0,r}=M(x^0,r).
$$
It follows that
$$
(D^2 u)_{r,x^0}=\-\int_{B_r(x^0)}D^2 u=D^2 q_{x^0,r}+\-\int_{B_r(x^0)}\frac{\Delta u}{n}I.
$$
Hence, 
$$
\Big\|D^2u-D^2q_{x^0,r}\Big\|_{L^2(B_r(x_0))}\leq \Big\|D^2 u-\-\int_{B_r(x_0)}D^2 u \Big\|_{L^2(B_r(x_0))}+\Big\|\frac{\lap u}{n}I\Big\|_{L^2(B_r(x_0))}.
$$
From this, the definition of $\Pi$ and rescaling (\ref{ivar}), the 
lemma follows.\end{proof}

\begin{lem}\label{measuregaintoprojgain}
Let $u\in W^{1,2}(B_1)$ be a solution to (\ref{main}) and 
$f\in L^\infty(B_1)$. If 
$x^0\in \Gamma\cap B_{1/2}$ and $r<\frac14$ then there holds
\begin{align*}
\big\| \Pi(u,r,0))-\Pi(v,r,0)-\Pi(u,r/2,0)+\Pi(v,r/2,0)\big\|_{L^\infty(B_1)} \le C \|f\|_{L^\infty(B_1)}\lambda_r^{\frac{1}{2}},
\end{align*}
where $C$ depends only on $n$.
\end{lem}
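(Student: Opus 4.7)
Setting $w = u - v$, we have $\Delta u = f\chi_{\Omega}$ and $\Delta v = f$, hence $\Delta w = -f\chi_{\Lambda}$ in $B_1$. By the linearity of $\Pi$ in its first argument (Lemma \ref{trivialPiprop}(2)), the left-hand side of the inequality equals $\|\Pi(w,r,0) - \Pi(w,r/2,0)\|_{L^\infty(B_1)}$. The plan is to eliminate the harmonic component of $w$ at each of the two scales before estimating anything. For $\rho \in \{r, r/2\}$ let $\phi_\rho \in H^1_0(B_\rho)$ be the solution of $\Delta \phi_\rho = -f\chi_{\Lambda}$ in $B_\rho$, and set $h_\rho := w - \phi_\rho$, which is harmonic in $B_\rho$. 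Since $\phi_r - \phi_{r/2}$ is harmonic in $B_{r/2}$, so is $h_{r/2} - h_r$, and Lemma \ref{trivialPiprop}(2)--(3) yield the telescoping identity
\[
\Pi(w,r,0) - \Pi(w,r/2,0) \;=\; \Pi(\phi_r, r, 0) - \Pi(\phi_r, r/2, 0),
\]
which reduces the problem to bounding each of the two projections on the right by $C\|f\|_{L^\infty(B_1)}\lambda_r^{1/2}$.

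For both $\rho \in \{r, r/2\}$, the explicit description of the projection established in the proof of Lemma \ref{BMOLem} gives
\[
D^2\Pi(\phi_r, \rho, 0) \;=\; \frac{1}{|B_\rho|}\int_{B_\rho} D^2\phi_r \;-\; \frac{1}{n|B_\rho|}\Big(\int_{B_\rho}\Delta\phi_r\Big)I,
\]
a constant symmetric traceless matrix whose norm controls $\|\Pi(\phi_r, \rho, 0)\|_{L^\infty(B_1)}$. Applying Jensen's inequality to the two averages and using the basic $L^2$ Calder\'on--Zygmund / Poincar\'e estimate for functions in $H^1_0(B_r)$ with $L^2$ Laplacian, namely
\[
\int_{B_r}|D^2\phi_r|^2 \;\le\; C\int_{B_r}|\Delta\phi_r|^2 \;=\; C\int_{B_r\cap \Lambda}f^2 \;\le\; C\|f\|_{L^\infty(B_1)}^2 |B_r|\lambda_r,
\]
yields $|D^2\Pi(\phi_r,r,0)|\le C\|f\|_{L^\infty(B_1)}\lambda_r^{1/2}$. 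The same bound at scale $r/2$ follows by first restricting the integrals above from $B_r$ to $B_{r/2}$ (which only increases the left-hand side) and then dividing by $|B_{r/2}|$; since $|B_r|/|B_{r/2}|$ is a dimensional constant, no deterioration of the bound occurs. Combined with the telescoping identity above, this completes the proof.

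The main conceptual step, and the only non-routine one, is the telescoping identity: it works because the harmonic extensions at the two scales differ by a function that is itself harmonic on the smaller ball, so they are invisible to $\Pi(\cdot, r/2, 0)$ by property (3) of Lemma \ref{trivialPiprop}. Once that cancellation is in place, the remaining estimate is merely the standard $L^2$ theory for the Poisson problem with zero boundary data, applied to a source supported on $\Lambda \cap B_r$.
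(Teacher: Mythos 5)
Your proof is correct and follows essentially the same route as the paper's: both subtract $v$, split off a harmonic remainder (which property (3) of Lemma \ref{trivialPiprop} makes invisible to the difference of projections at the two scales), and then bound the projection of the remaining potential of $-f\chi_{\Lambda}$ by $\|f\chi_{\Lambda}\|_{L^2}\le \|f\|_{L^\infty}\lambda_r^{1/2}$ via an $L^2$ estimate for second derivatives. The only differences are cosmetic: you use the Dirichlet solution on $B_r$ and the explicit averaged formula for $D^2\Pi$ --- which, note, the proof of Lemma \ref{BMOLem} does not literally establish (there $q_{x^0,r}$ is only used as a competitor in the minimization), although the identity is indeed valid because $\Pi$ is the $L^2(B_\rho)$-orthogonal projection of $D^2u$ onto constant symmetric traceless matrices --- whereas the paper uses the global Newtonian potential together with property (4) of Lemma \ref{trivialPiprop} and the Calder\'on--Zygmund estimate.
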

\begin{proof} Without loss of generality we may assume $x^0=0$ and $r=1$. Moreover, we may assume that $v(0)=|\nabla v(0)|=\Pi(v,1,0)=0$,
if not then $u$ solves the same problem with 
$v(x)-\nabla v\cdot x-\Pi(v,1,0)$ in place of $v$. It is noteworthy that we only pay attention to what $\lap v$ is, so the linear part of $v$ does not matter. We write
$$
u(x)=v(x)+g(x)+\Pi(u,1,0)
$$
where $g$ solves
$$
\Delta g=-f(x)\chi_{\Lambda\cap B_1}.
$$
Then, by properties 2-3 in \mbox{Lemma \ref{trivialPiprop}}
\begin{equation}\label{Piudecomp}
\Pi(u,1/2,0)=\Pi(v,1/2,0)+\Pi(g,1/2,0)+\Pi(u,1,0).
\end{equation}
We need to estimate $\Pi(g,1/2,0)$. To this end we write $g=\tilde{g}+h$
where $\Delta h=0$ and $\tilde{g}$ is defined by the Newtonian potential
$$
\tilde{g}(x)=\frac{-1}{n(n-2)\omega_n}\int_{\mathbb{R}^n}\frac{f\chi_{\Lambda\cap B_1}(y)}{|x-y|^{n-2}}dy,
$$
where $\omega_n$ is the area of the $n$-dimensional sphere. Notice that 
$$\Pi(\tilde{g},1,0)=-\Pi(h,1,0),
$$
since $\Pi(v,1,0)=0$. Moreover, $\Pi(h,1/2,0)=\Pi(h,1,0)$ since $h$ is harmonic. In particular
\begin{equation}\label{ProjginHB}
\Pi(g,1/2,0)=\Pi(h,1/2,0)+\Pi(\tilde{g},1/2,0)=
-\Pi(\tilde{g},1,0)+\Pi(\tilde{g},1/2,0).
\end{equation}
But Calderon-Zygmund theory together with property 4 in Lemma \ref{trivialPiprop} imply that for $t\in [1/2,1]$
$$
\|\Pi(\tilde{g},t,0)\|_{L^2(B_1)}\le C_2\|f\chi_{\Lambda}\|_{L^2(B_1)},
$$
so for $t\in [1/2,1]$ we have
\begin{equation}\label{test}
\|\Pi(\tilde{g},t,0)\|_{L^\infty}\le C \|f\chi_{\Lambda}\|_{L^2(B_1)}\le
C \|f\|_{L^\infty(B_1)}\lambda_1^\frac12. 
\end{equation}
In particular, \eqref{test} holds for $t=\frac12$ and $t=1$. Using this, (\ref{Piudecomp}), (\ref{ProjginHB}) and that $\Pi(v,1,0)=0$, we conclude
\begin{align*}
\big\| \Pi(u,1,0)-\Pi(v,1,0)-\Pi(u,1/2,0)+\Pi(v,1/2,0)\big\|_{L^\infty(B_1)}\le C \|f\|_{L^\infty(B_1)}\lambda_1^{\frac{1}{2}}.
\end{align*}
This ends the proof of the lemma.\end{proof}

\section{$S$ bounded implies $C^{1,1}$}
In this section we describe through somewhat standard arguments that if $S(u,r,x)$ (the coefficients in front of the projection in Definition \ref{defofPi}) is uniformly bounded then we obtain $C^{1,1}$-regularity. The first lemma says that quadratic growth away from the zero set implies $C^{1,1}$-regularity.

\begin{lem}\label{QuadGrowimliesreg} 
Let $u\in W^{1,2}(B_1)$ be a solution to (\ref{main}).
Suppose that for all $y\in \Gamma\cap B_{1/2}$ and $r\in (0,1/4)$,
the following estimate holds
\begin{equation}\label{quadgrowth}
\sup_{B_r(y)}|u|\le C_0r^2.
\end{equation}
Then 
$$
\|D^2 u\|_{L^\infty(B_\frac12)}\le C \big(C_0+\|D^2 v\|_{L^\infty(B_1)}+\|u\|_{L^1(B_1)} \big),
$$
where $C$ depends only on $n$.
\end{lem}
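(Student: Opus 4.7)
The plan is to establish a pointwise a.e.\ bound on $|D^2u|$ by splitting $B_{1/2}$ into points where $u=0$ and points where $u\ne 0$. From Theorem~\ref{CZtheory} we already know $u\in W^{2,p}_{\mathrm{loc}}(B_1)$ for every $p<\infty$, so $D^2u$ is defined a.e.; moreover, the standard Sobolev fact that $\nabla w=0$ almost everywhere on $\{w=0\}$, applied first to $u$ and then to each component of $\nabla u$, gives $D^2u=0$ a.e.\ on $\Lambda$. The task therefore reduces to bounding $|D^2u(x_0)|$ for a.e.\ $x_0\in\Omega\cap B_{1/2}$.

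The main step is a harmonic comparison at such an $x_0$. Set $d=\operatorname{dist}(x_0,\Gamma)$; since $\Omega$ is open with boundary contained in $\Gamma$, one has $B_d(x_0)\subset\Omega$ and therefore $\Delta u=f=\Delta v$ on $B_d(x_0)$. In the regime where $d<\tfrac14$ and the nearest free boundary point $y\in\Gamma$ with $|x_0-y|=d$ belongs to $B_{1/2}$, so that (\ref{quadgrowth}) applies at $y$, replace $v$ by its quadratic remainder
$$
\tilde v(x):=v(x)-v(y)-\nabla v(y)\cdot(x-y),
$$
which satisfies $D^2\tilde v=D^2v$ and $|\tilde v(x)|\le C\|D^2v\|_{L^\infty(B_1)}|x-y|^2$. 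The function $h:=u-\tilde v$ is harmonic on $B_d(x_0)\subset B_{2d}(y)$, and combining the quadratic growth (\ref{quadgrowth}) at $y$ with the bound on $\tilde v$ gives
$$
\sup_{B_d(x_0)}|h|\le\sup_{B_{2d}(y)}|u|+\sup_{B_{2d}(y)}|\tilde v|\le C\bigl(C_0+\|D^2v\|_{L^\infty(B_1)}\bigr)d^2.
$$
The interior Hessian estimate for harmonic functions then yields
$$
|D^2h(x_0)|\le\frac{C}{d^2}\sup_{B_{d/2}(x_0)}|h|\le C\bigl(C_0+\|D^2v\|_{L^\infty(B_1)}\bigr),
$$
and since $|D^2\tilde v(x_0)|\le\|D^2v\|_{L^\infty(B_1)}$, the triangle inequality gives the desired pointwise bound. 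The key cancellation is that the $d^2$ from quadratic growth exactly compensates the $d^{-2}$ from the interior harmonic estimate, so the resulting bound is independent of $d$.

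In the complementary regime, where $d$ is bounded below by a fixed dimensional constant, $u-v$ is harmonic in a ball of fixed radius around $x_0$, and interior estimates bound $|D^2(u-v)(x_0)|$ by $C\|u-v\|_{L^1(B_1)}\le C\bigl(\|u\|_{L^1(B_1)}+\|D^2v\|_{L^\infty(B_1)}\bigr)$ after normalising the affine part of $v$. Merging the two regimes and taking the essential supremum produces the claimed estimate. The main technical nuisance is ensuring that the nearest free boundary point $y$ actually lies in $B_{1/2}$ so that (\ref{quadgrowth}) applies; this is handled by first proving the bound on a strictly smaller concentric ball and then covering $B_{1/2}$ by rescaled copies, at the expense of enlarging the dimensional constant $C$.
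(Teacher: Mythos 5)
Your proof is correct and follows essentially the same route as the paper: the same dichotomy on $d=\operatorname{dist}(x_0,\Gamma)$, the same comparison of $u$ with the quadratic remainder of $v$, and the same cancellation of $d^2$ against the interior harmonic Hessian estimate $d^{-2}$. The only differences are cosmetic refinements (explicitly noting $D^2u=0$ a.e.\ on $\Lambda$, and the covering argument to ensure the nearest free boundary point lies where (\ref{quadgrowth}) applies), points the paper's proof passes over silently.
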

\begin{proof} Let us first recall that for any harmonic function $w$ there holds (cf Theorem 7 on page 29 in \cite{Evans2009partial}):
\begin{equation}\label{harmest}\|D^2 w\|_{L^\infty(B_r)}\leq \frac{C}{r^{n+2}}\|w\|_{L^1(B_{2r})}\leq \frac{C}{r^{2}}\|w\|_{L^\infty(B_{2r})},
\end{equation}
where $C$ depends on the dimension.

Let now $x^0\in B_{1/2}$ and  
$r=\inf\big(1/4, \textrm{dist}(x^0,\Gamma)\big)$. If $r=1/4$ then
$u-v$ is harmonic in $B_{1/4}(x^0)$ and thus from \eqref{harmest} we can deduce
\begin{align}
&\|D^2(u(x)-v(x))\|_{L^\infty(B_{1/8}(x^0))}\nonumber\\
\label{Harest1} &\le C_n \big\|u(x)+v(x)-\nabla v(x^0)\cdot (x-x^0)-v(x^0)\big\|_{L^1(B_{1/4}(x^0))}\\
&\le C_n\big( \|u\|_{L^1(B_1)}+\big\|v(x)-\nabla v(x^0)\cdot (x-x^0)-v(x^0)\big\|_{L^1(B_1)}\big).\nonumber
\end{align}
Since 
$$
\|v(x)-\nabla v(x^0)\cdot (x-x^0)-v(x^0)\|_{L^1(B_1)}\le C\|D^2 v\|_{L^\infty(B_1)},
$$
it follows from (\ref{Harest1}) that
\begin{equation}\label{Harest3}
\|D^2 u\|_{L^\infty(B_{1/8}(x^0))}\le
C_n\big( \|u\|_{L^1(B_1)}+\|D^2 v\|_{L^\infty(B_1)}\big).
\end{equation}
If $r<1/4$ then (\ref{quadgrowth}) implies that
$$
\sup_{B_r(x^0)}|u|\le \sup_{B_{2r}(y)}|u|\le 4C_0 r^2,
$$
where we have chosen $y\in \Gamma$ such that 
$|x^0-y|=\textrm{dist}(x^0,\Gamma)$. Invoking \eqref{harmest} once more, we obtain that 
\begin{align}\nonumber
&\| D^2 (u-v)\|_{L^\infty(B_{r/2}(x^0))}\\&\le \frac{C}{r^2}\left(\sup_{B_r(x_0)}|u|+\sup_{B_r(x_0)}
\big| v(x)-\nabla v(x^0)\cdot (x-x^0)-v(x^0)\big|\right)\label{Harest2}\\
&\le C\big(4C_0+\|D^2v\|_{L^\infty(B_1)}\big).\nonumber
\end{align}
Inequalities (\ref{Harest1})-(\ref{Harest2}) together imply
that for every $x\in B_{1/2}$ we get a bound of the second derivatives
of $u$. This implies the lemma. \end{proof}

The following lemma proves that $S(u,r,x)$ being bounded implies quadratic growth for $u$ away from the free boundary.

\begin{lem}\label{anotherpointlesstechnicallity}
Assume that $u$ solves (\ref{main}) and that $x^0\in \Gamma\cap B_{1/2}$.
Assume furthermore that, for some $C_1$, $S(u,r,x^0)\le C_1$ for all $r\in (0,1/4)$. Then
$$
\sup_{B_r(x^0)}|u|\le C\big( C_1+ \|u\|_{L^1(B_1)}+\|\Delta u\|_{L^\infty(B_1)}\big)r^2,
$$
where $C$ depend only on the dimension.
\end{lem}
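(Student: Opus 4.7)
The plan is to decompose $u = \Pi_r + (u - \Pi_r)$ at each scale $r$, bounding $\Pi_r$ via the hypothesis $S\le C_1$ and the remainder via Lemma \ref{BMOLem}. After translating so that $x^0 = 0$, I would work with the rescaled $u_r(y):=u(ry)/r^2$ on $B_1$ and aim to show $\|u_r\|_{L^\infty(B_{1/2})}\le C(C_1+M)$, where $M:=\|u\|_{L^1(B_1)}+\|\Delta u\|_{L^\infty(B_1)}$; unscaling (and replacing $r$ by $2r$) will then give the target quadratic estimate.

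The hypothesis $S(u,r,0)\le C_1$ together with the normalization $\|D^2 p_{u,r,0}\|_{L^\infty(B_1)}=1$ immediately gives $\|\Pi_r\|_{L^\infty(B_1)}\le C C_1$. Setting $w_r:=u_r-\Pi_r$, harmonicity of $\Pi_r$ ensures $\Delta w_r=\Delta u_r\in L^\infty(B_1)$ with norm $\le M$, and Lemma \ref{BMOLem} supplies $\|D^2 w_r\|_{L^2(B_1)}\le CM$. To turn $L^2$ Hessian control into an $L^\infty$ bound on $w_r$, I would choose an affine $\ell_r(y)=a_r+b_r\cdot y$ making $w_r-\ell_r$ and its gradient mean-zero on $B_1$; Poincaré--Wirtinger then yields $\|w_r-\ell_r\|_{L^2(B_1)}\le CM$. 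Feeding this into Theorem \ref{CZtheory}(1) applied to $w_r-\ell_r$ (whose Laplacian is $\Delta u_r\in L^\infty$) and iterating with Sobolev embedding, I would obtain $\|w_r-\ell_r\|_{W^{2,p}(B_{1/2})}\le C_p M$ for every $p<\infty$; Morrey's inequality then gives $\|w_r-\ell_r\|_{C^{1,\alpha}(B_{1/4})}\le CM$ uniformly in $r$.

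The last step is to pin down $\ell_r$. Since $0\in\Gamma$ and $u$ is continuous, $u(0)=0$ forces $w_r(0)=0$; and the structural fact $\nabla u(0)=0$ at the free boundary point (obtained from continuity of $\nabla u$ via $C^{1,\alpha}$ Calderón--Zygmund regularity combined with $\nabla u=0$ a.e.\ on $\Lambda$) yields $\nabla w_r(0)=0$. Evaluating the uniform $C^{1,\alpha}$ bound at the origin then forces $|a_r|=|\ell_r(0)|\le CM$ and $|b_r|=|\nabla\ell_r(0)|\le CM$, so $\|\ell_r\|_{L^\infty(B_1)}\le CM$. Summing the three contributions,
\[
\|u_r\|_{L^\infty(B_{1/4})}\le\|\Pi_r\|_{L^\infty(B_1)}+\|w_r-\ell_r\|_{L^\infty(B_{1/4})}+\|\ell_r\|_{L^\infty(B_1)}\le C(C_1+M),
\]
which, after unscaling and an elementary covering/doubling argument, yields the stated estimate.

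The hard part will be controlling the affine correction $\ell_r$, which really amounts to verifying that $\nabla u$ vanishes at the free boundary point: once this is in hand, evaluation of the $C^{1,\alpha}$ bound at $y=0$ disposes of both $a_r$ and $b_r$ automatically. Everything else in the plan --- Lemma \ref{BMOLem}, the Calderón--Zygmund/Sobolev bootstrap, and Morrey embedding --- is mechanical once the correct scaling is set up.
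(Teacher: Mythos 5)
Your proposal is correct and is essentially the paper's own argument: the paper likewise bounds $\|D^2(u(rx+x^0)/r^2)\|_{L^2(B_1)}$ by $C(C_1+M)$ (absorbing the projection via $S\le C_1$ rather than keeping $\Pi_r$ as a separate $L^\infty$-bounded summand, a purely cosmetic difference), subtracts the same affine correction built from the averages of $u_r$ and $\nabla u_r$, applies Poincar\'e and interior $C^{1,\alpha}$ estimates, and pins down the affine part by evaluating at the origin using $u(x^0)=|\nabla u(x^0)|=0$ --- which the paper treats as part of the formulation of the free boundary condition rather than something to be re-derived.
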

\begin{proof} By Lemma \ref{BMOLem} we know that
$$
\Big\|D^2 \Big(\frac{u(rx+x^0)}{r^2}-\Pi(u,r,x^0) \Big)\Big\|_{L^2(B_1)}
\le C\big( \|u\|_{L^1(B_1)}+\|\Delta u\|_{L^\infty(B_1)}\big).
$$
From the triangle inequality and the hypothesis of the lemma we can deduce
\begin{equation}\label{D2boundfru}
\Big\|D^2 \frac{u(rx+x^0)}{r^2}\Big\|_{L^2(B_1)}
\le C\big( C_1+ \|u\|_{L^1(B_1)}+\|\Delta u\|_{L^\infty(B_1)}\big).
\end{equation}
Define
$$
w(x)=\frac{u(rx+x^0)}{r^2}-x\cdot \left(\nabla  \left(\frac{u(rx+x^0)}{r^2} \right)\right)_{1,0}
-\left(\frac{u(rx+x^0)}{r^2} \right)_{1,0},
$$
where $(f(x))_{r,y}$ is the average of $f$ over the ball $B_r(y)$ as in Definition
\ref{BMOdef}. Then the Poincar\'e inequality and (\ref{D2boundfru}) imply that
\begin{equation}\label{ujnmki}
\|w\|_{L^2(B_1)}\le C\big( C_1+ \|u\|_{L^1(B_1)}+\|\Delta u\|_{L^\infty(B_1)}\big).
\end{equation}
Also $\Delta w=f(rx+x^0)\chi_{\{u(rx+x^0)=0\}}$ which together with (\ref{ujnmki})
imply that 
$$
\|w\|_{C^{1,\alpha}(B_{1/2})}\le C_\alpha \big( C_1+ \|u\|_{L^1(B_1)}+\|\Delta u\|_{L^\infty(B_1)}\big),
$$
for each $\alpha<1$. By assumption $u(x^0)=|\nabla u(x^0)|=0$, implying
$$
w(0)=-\left(\frac{u(rx+x^0)}{r^2}\right)_{1,0},\quad \textup{and}\quad \nabla w(0)=-\left(\nabla \left(\frac{u(rx+x^0)}{r^2} \right)\right)_{1,0}.
$$
By the $C^{1,\alpha}$ estimates for $w$ we can conclude
$$
\Big|\Big(\frac{u(rx+x^0)}{r^2}\Big)_{1,0} \Big|
\le C \big( C_1+ \|u\|_{L^1(B_1)}+\|\Delta u\|_{L^\infty(B_1)}\big),
$$
and
$$
\Big|\Big(\nabla \frac{u(rx+x^0)}{r^2} \Big)_{1,0} \Big|
\le C \big( C_1+ \|u\|_{L^1(B_1)}+\|\Delta u\|_{L^\infty(B_1)}\big),
$$
where $C$ is a constant depending only on the dimension.
From this and the triangle inequality we can conclude
\begin{align*}
\frac{1}{r^2}\sup_{B_{r/2}}|u|&\le \sup_{B_{1/2}}|w|+\Big|\Big(\frac{u(rx+x^0)}{r^2}\Big)_{1,0}\Big|
+\frac{1}{2}\Big|\Big(\nabla \frac{u(rx+x^0)}{r^2} \Big)_{1,0}\Big|\\
&\le C \big( C_1+ \|u\|_{L^1(B_1)}+\|\Delta u\|_{L^\infty(B_1)}\big),
\end{align*}
and the lemma is proved. \end{proof}


\section{Proof of Theorem 1}\label{secproof}

In this section we prove our main result. The first step is to prove that heuristically,
if $\Lambda$ does not have a cusp of infinite order, then $S(u,r,x)$ is uniformly bounded. In view of the previous section, 
this would imply the correct regularity in this special case.

\begin{prop}\label{Mainprop}
Let $u\in W^{1,2}(B_1)$ be a solution to (\ref{main}). Then there exist $C_0$ and $C_1$
depending only on the dimension 
such that if $x^0\in \Gamma\cap B_{1/2}$ and $r<\frac14$ then
$$
\frac{C_0\|D^2 v\|_{L^\infty(B_1)}}{S(r,u,x^0)-C_1\big( \|u\|_{L^1(B_1)}+\|D^2 v\|_{L^\infty(B_1)}\big)}\lambda_r^{1/2}\ge
\lambda_{\frac{r}{2}}^{1/2},
$$
whenever
$$
S(r,u,x^0)> 2C_1\big( \|u\|_{L^1(B_1)}+\|D^2 v\|_{L^\infty(B_1)}\big).
$$
\end{prop}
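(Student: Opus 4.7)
The plan is to leverage the crucial identity $D^2 u = 0$ a.e.\ on $\Lambda$, which converts the problem into an $L^2$-discrepancy estimate of $u$ against the best-fit homogeneous harmonic quadratic. By translation I may assume $x^0 = 0$, and by subtracting from $v$ its first-order Taylor polynomial at the origin (an operation that changes neither $\Delta v = f$ nor the statement) I may further take $v(0) = |\nabla v(0)| = 0$. Since $D^2 \Pi(u, r, 0)$ is a constant matrix whose Frobenius norm equals $S(u, r, 0)$, this gives
\[
\int_{\Lambda \cap B_{r/2}} |D^2 u - D^2 \Pi(u, r, 0)|^2 \, dx = S(u, r, 0)^2 \, \lambda_{r/2} \, |B_{r/2}|,
\]
and the trivial inclusion $\Lambda \cap B_{r/2} \subset B_{r/2}$ bounds the left-hand side by $\int_{B_{r/2}} |D^2 u - D^2 \Pi(u, r, 0)|^2$.

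To estimate this latter integral I would split
\[
D^2 u - D^2 \Pi(u, r, 0) = \bigl[D^2 u - D^2 \Pi(u, r/2, 0)\bigr] + \bigl[D^2 \Pi(u, r/2, 0) - D^2 \Pi(u, r, 0)\bigr].
\]
Lemma \ref{BMOLem} at scale $r/2$ controls the first bracket's $L^2(B_{r/2})$ norm by $C (r/2)^{n/2}(\|u\|_{L^1(B_1)} + \|D^2 v\|_{L^\infty(B_1)})$. For the second bracket, set $W = u - v$; linearity of $\Pi$ gives
\[
D^2 \Pi(u, r/2, 0) - D^2 \Pi(u, r, 0) = D^2[\Pi(W, r/2, 0) - \Pi(W, r, 0)] + D^2[\Pi(v, r/2, 0) - \Pi(v, r, 0)].
\]
Lemma \ref{measuregaintoprojgain} bounds the first summand by $C \|D^2 v\|_{L^\infty(B_1)} \lambda_r^{1/2}$ (via $\|f\|_{L^\infty} \le n \|D^2 v\|_{L^\infty}$), while Lemma \ref{trivialPiprop}(5) controls each of the two $\Pi(v, \cdot, 0)$ individually by $C \|D^2 v\|_{L^\infty(B_1)}$. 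Assembling these pieces and cancelling the $|B_{r/2}|$ volume factors yields a bound of the shape
\[
S(u, r, 0)^2 \, \lambda_{r/2} \le A \bigl((\|u\|_{L^1(B_1)} + \|D^2 v\|_{L^\infty(B_1)})^2 + \|D^2 v\|_{L^\infty(B_1)}^2 \, \lambda_r\bigr)
\]
for a dimensional constant $A$.

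The concluding task is to rearrange this into the proposition's asserted form. Under the hypothesis $S(u, r, 0) > 2 C_1(\|u\|_{L^1(B_1)} + \|D^2 v\|_{L^\infty(B_1)})$ with $C_1$ chosen large relative to $A$, one absorbs the $(\|u\|_{L^1} + \|D^2 v\|_{L^\infty})^2$ contribution into the left-hand side to produce $(S(u, r, 0) - C_1(\ldots))^2 \lambda_{r/2} \le C_0^2 \|D^2 v\|_{L^\infty(B_1)}^2 \lambda_r$, which is the proposition after taking square roots. I anticipate this absorption to be the most delicate step, relying both on the careful comparison between the $L^\infty(B_1)$ norm of a homogeneous harmonic quadratic and the Frobenius norm of its (constant) Hessian, and on calibrating $C_1$ so that the hypothesis forces the dimension-constant threshold term to sit below the left-hand side rather than obstruct the $\lambda_r^{1/2}$ conclusion.
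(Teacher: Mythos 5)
Your starting point---$D^2u=0$ a.e.\ on $\Lambda$, hence $\int_{\Lambda\cap B_{r/2}}|D^2u-D^2\Pi(u,r,0)|^2=S(u,r,0)^2|\Lambda\cap B_{r/2}|$---is exactly the paper's, and your treatment of the projection increments via Lemma \ref{measuregaintoprojgain} is in the right spirit. The gap is in the very first estimating step, the ``trivial inclusion'' $\Lambda\cap B_{r/2}\subset B_{r/2}$: it discards a factor of $\lambda_{r/2}$ from the error term, and that factor is not decorative. Writing $M=\|u\|_{L^1(B_1)}+\|D^2v\|_{L^\infty(B_1)}$, Lemma \ref{BMOLem} controls $D^2u-D^2\Pi(u,r/2,0)$ only in $L^2(B_{r/2})$, by $CM(r/2)^{n/2}$, and an $L^2$ bound over the full ball gives no improvement when the integration is restricted to the (possibly very small) subset $\Lambda\cap B_{r/2}$. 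Your assembled inequality is therefore
\begin{equation*}
S(u,r,0)^2\,\lambda_{r/2}\;\le\;A\bigl(M^2+\|D^2v\|_{L^\infty(B_1)}^2\,\lambda_r\bigr),
\end{equation*}
with $M^2$ \emph{not} multiplied by $\lambda_{r/2}$. The concluding absorption then fails: to pass from this to $(S-C_1M)^2\lambda_{r/2}\le C_0^2\|D^2v\|_{L^\infty(B_1)}^2\lambda_r$ one would need $C_1M(2S-C_1M)\lambda_{r/2}\ge AM^2$, which is false whenever $S$ is near the threshold $2C_1M$ and $\lambda_{r/2}<A/(3C_1^2)$---precisely the regime the proposition is designed for. (In the iteration proving Theorem \ref{mainthm} the point is that $\lambda_{2^{-k}r}^{1/2}$ decays geometrically when $S$ stays large; your version only yields $\lambda_{r/2}^{1/2}\le A^{1/2}M/S+\cdots$, which does not force decay to zero and makes the series $\sum_k\lambda_{2^{-k}r}^{1/2}$ diverge.)

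What is needed, and what the paper supplies, is a \emph{pointwise} bound on the regular part of $D^2u$, so that its $L^2(\Lambda\cap B_{r/2})$ norm genuinely carries the factor $|\Lambda\cap B_{r/2}|^{1/2}$. Concretely, the paper writes $u(rx+x^0)/r^2=w_r+S(u,r,x^0)p_{u,r,x^0}+g_r$, where $\Delta g_r=-f(rx+x^0)\chi_{\Lambda_r}$ with zero boundary data, so that $\|D^2g_r\|_{L^2(B_{1/2})}\le C\|D^2v\|_{L^\infty(B_1)}\lambda_r^{1/2}$ (this plays the role of your Lemma \ref{measuregaintoprojgain} step), while $\Delta w_r=f(rx+x^0)$ with boundary data controlled through Lemma \ref{BMOLem}; interior elliptic estimates then give the key bound $\|D^2w_r\|_{L^\infty(B_{1/2})}\le CM$, whence $\|D^2w_r\|_{L^2(\Lambda_r\cap B_{1/2})}\le CM|\Lambda_r\cap B_{1/2}|^{1/2}$. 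The triangle inequality on $\Lambda_r\cap B_{1/2}$ then yields $S\lambda_{r/2}^{1/2}\le CM\lambda_{r/2}^{1/2}+C\|D^2v\|_{L^\infty(B_1)}\lambda_r^{1/2}$, from which the stated inequality follows by algebra. Without some such $L^\infty$ ingredient for the non-singular part, your argument cannot be completed.
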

\begin{proof} We write
$$
\frac{u(rx+x^0)}{r^2}=w_r(x)+S(u,r,x^0)p_{u,r,x^0}(x)+g_r(x),
$$ 
where 
$$
\left\{
\begin{array}{ll}
\Delta g_r=-f(rx+x_0)\chi_{\Lambda(u(rx+x^0))} & \textrm{ in } B_{1}, \\
g_r=0 & \textrm{ on } \partial B_{1},
\end{array}\right.
$$
and 
$$
\left\{\begin{array}{ll}
\Delta w_r=f(rx+x_0) & \textrm{ in } B_{1}, \\
w_r=\frac{u(rx+x^0)}{r^2}-S(u,r,x^0)p_{u,r,x^0}(x) & 
\textrm{ on } \partial B_{1}. 
\end{array}\right.
$$
Now we claim the following: {\sl With $g_r$ and $w_r$ as above, the following estimates 
hold
\begin{equation}\label{L1weakforgr}
\| D^2 g_r\|_{L^2(B_{1/2})}\le 
C\|f\|_{L^\infty}\|\chi_{\Lambda(u(rx+x^0))}\|_{L^2(B_{1})},
\end{equation}
and
\begin{equation}\label{Linftyforwr}
\|D^2 w_r\|_{L^\infty(B_{1/2})}\le C\big( \|u\|_{L^1(B_1)}+\|D^2 v\|_{L^\infty(B_1)}\big).
\end{equation}}
It is clear that \eqref{L1weakforgr} follows from standard estimates for Laplace equation. Moreover, by Lemma \ref{BMOLem} 
\begin{equation}\label{d2est}
\Big\|D^2\Big(\frac{u(rx+x^0)}{r^2}-S(u,r,x^0)p_{u,r,x^0}(x)\Big) \Big\|_{L^2(B_1)}\le C\big( \|u\|_{L^1(B_1)}+\|D^2 v\|_{L^\infty(B_1)}\big). 
\end{equation}
Hence, if we define
\begin{align*}
\tilde{w}_r(x)&=\frac{u(rx+x^0)}{r^2}-S(u,r,x^0)p_{u,r,x^0}(x)\\&-\left(\frac{u(rx+x^0)}{r^2}\right)_{1,0}-x\cdot \left(\nabla \left(\frac{u(rx+x^0)}{r^2}\right)\right)_{1,0}, 
\end{align*}
then Poincar\'es inequality and \eqref{d2est} imply
$$
\|\tilde{w}_r\|_{L^2(B_1)}\leq C\big( \|u\|_{L^1(B_1)}+\|D^2 v\|_{L^\infty(B_1)}\big). 
$$
Then interior estimates for Laplace equation imply
$$
\|\tilde{w}_r\|_{C^{1,\alpha}(B_\frac34)}\leq C\big( \|u\|_{L^1(B_1)}+\|D^2 v\|_{L^\infty(B_1)}\big),
$$
and in particular
\begin{align*}
&\sup_{B_\frac34}|\tilde{w}_r|+|\tilde{w}_r(0)|+|\nabla \tilde{w}_r(0)|\\&=\sup_{B_\frac34}|\tilde{w}_r|+\Big|\left(\frac{u(rx+x^0)}{r^2}\right)_{1,0}\Big|+\Big| \left(\nabla \left(\frac{u(rx+x^0)}{r^2}\right)\right)_{1,0}\Big|\\&\leq
C\big( \|u\|_{L^1(B_1)}+\|D^2 v\|_{L^\infty(B_1)}\big).
\end{align*}
Therefore, 
\begin{align*}
\sup_{\dd B_\frac34} |w_r|&\leq \sup_{\dd B_\frac34}|\tilde{w}_r|+\Big|\left(\frac{u(rx+x^0)}{r^2}\right)_{1,0}\Big|+\Big| \left(\nabla \left(\frac{u(rx+x^0)}{r^2}\right)\right)_{1,0}\Big|\\&
\leq C\big( \|u\|_{L^1(B_1)}+\|D^2 v\|_{L^\infty(B_1)}\big).
\end{align*}
The estimate \eqref{Linftyforwr} now follows from interior estimates.

Let $\Lambda_r=\{u(rx+x^0)=0\}$. Since $u=0$ in $\Lambda$ we have that
$$
0=\Big\|D^2 \frac{u(r x+x_0)}{r^2}\Big\|_{L^2(\Lambda_r \cap B_{1/2})}=\big\|D^2\big(w_r-g_r+S(u,r,x^0)p_{u,r,x^0}\big)\big\|_{L^2(\Lambda_r\cap B_{1/2})}.
$$
From H\"olders inequality it follows that
\begin{align}
0&=\big\|D^2\big(w_r-g_r+S(u,r,x^0)p_{u,r,x^0}\big)\big\|_{L^2(\Lambda_r\cap B_{1/2})} \label{dmc}\\&\ge S(u,r,x^0)\|D^2 p_{u,r,x^0}\|_{L^2(\Lambda_r\cap B_{1/2})}
-\|D^2g_r\|_{L^2(\Lambda_r\cap B_{1/2})}-\|D^2  w_r\|_{L^2(\Lambda_r\cap B_{1/2})}.\nonumber
\end{align}
Next, since $D^2 p_{u,r,x^0}$ is a constant matrix,
$
\|D^2p_{u,r,x^0}\|_{L^\infty(B_1)}=1
$
implies
$$
\|D^2 p_{u,r,x^0}\|_{L^2(\Lambda_r\cap B_{1/2})}\geq C
|\Lambda_r\cap B_{1/2}|^{1/2},
$$
where $C$ is a dimensional constant. Using (\ref{Linftyforwr}),
it follows that
$$
\|D^2 w_r\|_{L^2(\Lambda_r\cap B_{1/2})}
\le C\big( \|u\|_{L^1(B_1)}+\|D^2 v\|_{L^\infty(B_1)}\big)|\Lambda_r\cap B_{1/2}|^{1/2}
$$ 
and also by \eqref{L1weakforgr}
\begin{align*}
\|D^2 g_r\|_{L^2(\Lambda_r\cap B_{1/2})}\le \|D^2 g_r\|_{L^2(B_{1/2})}
\le C\|D^2 v\|_{L^{\infty}(B_1)} |\Lambda_r\cap B_{1}|^{1/2}.
\end{align*}
Observe that the right hand side in  \eqref{L1weakforgr}, can be estimated as follows
$$
\| f\|_{L^{\infty}(B_1)} = \| \Delta v \|_{L^{\infty}(B_1)} \leq \| D^2 v\|_{L^{\infty}(B_1)} .
$$

Inserting these three estimates in (\ref{dmc}) we deduce 
\begin{align*}
&C\big( \|u\|_{L^1(B_1)}+\|D^2 v\|_{L^\infty(B_1)}\big)|\Lambda_r\cap B_{1/2}|^\frac12+C\|D^2 v\|_{L^{\infty}(B_1)}|\Lambda_r \cap B_{1}|^\frac12 \\
&\ge S(u,r,x^0)|\Lambda_r\cap B_{1/2}|^\frac12.
\end{align*}
The lemma follows by simple algebra.\end{proof}

Now we are ready to prove our main theorem.
\begin{proof}[Proof of Theorem \ref{mainthm}] In view of Lemma \ref{QuadGrowimliesreg}-\ref{anotherpointlesstechnicallity} it is enough
to show that
\begin{equation}\label{tobenamed}
S(u,r,x^0) \le C\big( \|u\|_{L^1(B_1)}+\|D^2 v\|_{L^\infty(B_1)}\big),
\end{equation}
for all $r\in (0,1/8)$ and $x_0\in \Gamma\cap B_{1/2}$. We will do this by an iteration. Let us assume that
\begin{equation}\label{intcon}
S(u,r,x^0)=k_0\big( \|u\|_{L^1(B_1)}+\|D^2 v\|_{L^\infty(B_1)}\big),
\end{equation}
for some $k_0$ to be determined later and some fixed $r>0$. If (\ref{intcon})
is not satisfied for any $r>0$ then (\ref{tobenamed}) is certainly true. Furthermore, we will assume that 
$$
S(u,2^{-j}r,x^0)\ge k_0\big( \|u\|_{L^1(B_1)}+\|D^2 v\|_{L^\infty(B_1)}\big),
$$
for all $j=1,2,\ldots,j_0$ where $j_0$ is arbitrary and may be equal to $\infty$. We will show that then
\begin{equation}\label{conclusion}
S(u,2^{-j}r,x^0)\le Ck_0\big( \|u\|_{L^1(B_1)}+\|D^2 v\|_{L^\infty(B_1)}\big),
\end{equation}
for $j=1,2,\ldots,j_0$. Hence, for all $j=1,\ldots,\infty$, either 
$$
S(u,2^{-j}r,x^0)\le k_0\big( \|u\|_{L^1(B_1)}+\|D^2 v\|_{L^\infty(B_1)}\big),
$$
or \eqref{conclusion} holds. This clearly implies that $S(u,2^{-j}r,x^0)$ can never exceed 
$$Ck_0\big( \|u\|_{L^1(B_1)}+\|D^2 v\|_{L^\infty(B_1)}\big),
$$
which in turn implies (\ref{tobenamed}).

We notice that
\begin{align*}
I&:=\sup_{B_1}\big| \Pi(u,2^{-j}r,x^0)-\Pi(u,r,x^0)\big|\\
&\le \sup_{B_1}\Big| \sum_{k=1}^{j}\big(\Pi(u,2^{-k}r,x^0)-\Pi(u,2^{-k+1}r,x^0)\big)\\
&-\sum_{k=1}^{j}\big(\Pi(v,2^{-k}r,x^0)-\Pi(v,2^{-k+1}r,x^0)\big)\Big|\\
&+\sup_{B_1}\Big|\sum_{k=1}^{j}\big(\Pi(v,2^{-k}r,x^0)-\Pi(v,2^{-k+1}r,x^0)\big) \Big|\\
&\le \sum_{k=1}^{j}\sup_{B_1}\big|\Pi(u,2^{-k}r,x^0)-\Pi(v,2^{-k+1}r,x^0)\\
&-\Pi(u,2^{-k+1}r,x^0)+\Pi(v,2^{-k+1}r,x^0) \big|
+2C\|D^2 v\|_{L^\infty(B_1)},
\end{align*}
where we have used property 5 in Lemma \ref{trivialPiprop} in order to obtain the last inequality. Using Lemma \ref{measuregaintoprojgain} we arrive at
\begin{equation}\label{est}
I\leq C\|f\|_{L^\infty(B_1)}\sum_{k=1}^j\lambda_{2^{-k+1}r}^{1/2}+2C\|D^2 v\|_{L^\infty(B_1)}.
\end{equation}
Since 
$$
S(2^{-k+2}r)>k_0\big( \|u\|_{L^1(B_1)}+\|D^2 v\|_{L^\infty(B_1)}\big),
$$
for $k\geq 3$, it follows from Proposition \ref{Mainprop} that for $k\geq 3$ there holds
\begin{align*}
\lambda_{ 2^{-k+1}r}^\frac12\leq \frac{C\lambda_{2^{-k+2}r}^\frac12\|D^2 v\|_{L^\infty(B_1)}}{(k_0-C)\big( \|u\|_{L^1(B_1)}+\|D^2 v\|_{L^\infty(B_1)}\big)}.
\end{align*}
This implies that we can estimate \eqref{est} and obtain
$$
I\le C_0\|D^2 v\|_{L^\infty(B_1)}\bigg(1+\sum_{k=1}^\infty 
\Big( C_1\frac{\|D^2 v\|_{L^\infty(B_1)}}{(k_0-C)\big(\|u\|_{L^1(B_1)}+\|D^2 v\|_{L^\infty(B_1)} \big)}\Big)^{k} \bigg).
$$
Notice that if $k_0$ is large enough, depending only on the dimension 
(say $k_0>2C_1+C$), then
the sum in the last expression converges and we may conclude that
$$
\sup_{B_1}\big| \Pi(u,2^{-j}r,x^0)-\Pi(u,r,x^0)\big|\le C\|D^2 v\|_{L^\infty(B_1)}.
$$
In particular, from the triangle inequality
\begin{align*}
S(u,2^{-j}r,x^0)&\le 
C\sup_{B_1}\big| \Pi(u,2^{-j}r,x^0)-\Pi(u,r,x^0)\big|+
S(u,r,x^0)\\
&\le k_0\big(\|u\|_{L^1(B_1)}+\|D^2 v\|_{L^\infty(B_1)}\big)+
C\|D^2 v\|_{L^\infty(B_1)},
\end{align*}
this clearly implies (\ref{conclusion}) and the theorem follows.\end{proof}


\section{Proof of Theorem 2}
In this section we use standard methods, adopted from for instance \cite{PS}, to prove that the free boundary is $C^1$ regular, except at cusp-like points. 
We will need some auxilary results presented in the appendix.

\begin{proof}[Proof of Theorem \ref{fbrthm}] 
We prove that given $\e >0$ there is $r_\e$ such that if 
$ \delta(u,r)\geq \e $
for some $r<r_\e$, then $u\geq 0$ in $B_{r/2}$, and moreover $\delta(u,r)\geq 1/4$ for all $r\leq c_0 r$ for a universal $c_0$.
The latter follows by classification of global solutions.
Hence by Blank's regularity theory for the obstacle problem with
Dini continuous right hand side, Theorem 7.2 in \cite{Blank}, we conclude that the free boundary is $C^1$ in a yet smaller ball, with universal radius.
Finally we can take $\sigma$ as the inverse of the mapping $\e\to r_\e$. Let us now fill into details here below.

\vspace{3mm}

\noindent 
{\bf Step 1) $u\geq 0$ in $B_{r/2}$:} We argue by contradiction. If this is not the case, then there is a sequence 
$r_j\to 0$,  $x^j \in B_{r_j/2}$, $u_j, f_j$
(solving our problem)  such that, $f_j$ are uniformly Dini, $f_j(0)=1$, $\|u_j\|_{L^1}$ is uniformly bounded, and
\begin{equation}\label{deltaje}
\delta(u_j,r_j)\geq \e, \qquad u_j(x^j)<0 .
\end{equation}
Let now $W(u_j,r_j,0)$ be the monotonicity function introduced in the Appendix. Then in virtue of \eqref{deltaje}, we can apply  Proposition 1 in \cite{PS} 
(See Appendix), for $j$ large enough,  to conclude 
$$
W(u_j,r_j,0)<2A_n-\eta,
$$
for some $\eta=\eta(\e)$. This means in particular that $W(u_j,0^+,0)=A_n$ (see the paragraph before Theorem 2 in  \cite{PS}, and also Lemma 4).
Next by the upper semi-continuity of $W$, and a similar reasoning as above,  there is a small radius $\tau$ so that $W(u_j,0^+,y)=A_n$ for $y \in \Gamma (u_j)\cap B_\tau$. 
In particular, for $j$ large enough,  
$$
W(u_j,0^+,y^j) =  A_n ,
$$
where  $y^j \in \Gamma (u_j)$ is such that it realizes the distance 
$t_j:=\hbox{dist}(x^j,\Gamma (u_j))\to 0$ and $y^j\to 0$. This in turn implies
\begin{equation}\label{W-at-y}
W(u_j,t_j,y^j) <  (3/2)A_n  ,
\end{equation}
for $j$ large enough.

Letting
$$
v_j(x)=\frac{u_j(y^j+t_jx)}{t_j^2},
$$
we see that $v_j$ verifies
$$\lap v_j=f_j(\cdot\, t_j+y^j)\chi_{\{u_j\neq 0\}} \textup{ in $B_\frac{1}{2t_j}$},$$
$$ \sup_{B_p} |v_j|\leq C\rho^2,\textup{ for $1<\rho < 1/t_j$,}$$
\begin{equation*}
v_j((x^j-y^j)/t_j)\leq 0,
\end{equation*}
\begin{equation*}
\lap v_j=f( \cdot\, t_j+y^j) \textup{ in }  B_1((x^j-y^j)/t_j) .
\end{equation*}
All the above equations and inequalities, remain invariant in the limit and by standard estimates for elliptic equations that
 there is a subsequence, again labeled $v_j$, such that $v_j\to v_0$ where
$$
\lap v_0=f_0(0)\chi_{\{v_0\neq 0\}}=\chi_{\{v_0\neq 0\}}\textup{ in $\R^n$}, \qquad v_0(0)= 0,
$$
$$ \sup_{B_\rho} |v_0|\leq C\rho^2,\textup{ for $\rho>1$,}$$
\begin{equation*}
v_0(z^0)\leq 0,
\end{equation*}
\begin{equation*}
 \Delta v_0 = f_0(0)=1 \quad \hbox{ in } \quad B_1(z^0), \qquad z^0= \lim_j (x^j-y^j)/t_j, \quad |z^0|=1 .
\end{equation*}
Moreover, by (\ref{W-at-y}),
$$
W(v_0,1,0)=\lim_{j\to \infty}W(v_j,1,0)=\lim_{j\to \infty }W(u_j,t_j,y^j)  \leq (3/2)A_n,
$$
which implies that $v_0$ is a non-polynomial global solution (see the paragraph before Theorem 2 in  \cite{PS}, and also Lemma 4). 

Hence we can apply  the classification theorem for global solutions (see Theorem II in \cite{CKS}) to conclude that $v_0 \geq 0$, $D_{ee}v_0 \geq 0$,
for all directions $e$. In particular the set $\{v_0=0\}$ is convex, and has non-empty interior ($v_0$ is non-polynomial) contradicting the facts that 
$$
v_0(0)=0,\qquad v_0(z^0) \leq 0 , \qquad \Delta v_0 = 1 \quad \hbox{in } B_1(z^0).
$$


\noindent{\bf Step 2) Applying Blank's regularity result:} 
From Step 1 we know that $u\geq 0$ in $B_{r/2}$. Now we claim that $\delta(u,r) \geq 1/4$ for $r<c_0r$ and $c_0$ a universal constant, whenever  $\delta(u,r) \geq \e$
for $r <r_\e$, and $r_\e$ small enough. If this fails, then once again as in Step 1, we shall have  sequence  $r_j \to 0$, $u_j$, ..., such that
$$
\delta (u_j,r_j) \geq \e , \qquad 
\delta (u_j,c_0 r_j) < 1/4 .
$$
Now scaling $u_j$ by $r_j$ and letting $r_j$ tend to zero we shall arrive at a non-negative global solution $v_0$ (as in the argument in Step 1)
where the minimal diameter for the the limit set $\{v_0=0\}$ satisfies
$$
\delta (v_0,1) \geq \e , \qquad 
\delta (v_0,c_0)< 1/4 ,
$$
for $c_0$ however small. On the other hand local regularity of L.A. Caffarelli \cite{CaffRev} implies that the free boundary is a $C^1$-graph locally,
in a uniform neighborhood of the origin. Hence there is a constant $c_0 >0$ such that 
the flatness condition $ \delta (v_0,c_0) > 1/4 $ should hold, contradicting the above conclusion.

From here we can apply Blank's regularity result as in Theorem 7.2 in \cite{Blank}.

\end{proof}

\section{Appendix}
Here we present a version of Weiss' monotonicity formula and some consequences that we need in order to prove the regularity of the free boundary.
\begin{prop} \label{weissprop} Let $u$ be a solution of \eqref{main} and assume that $f\in C^{Dini}(B_1)$. Then there is a continuous function $F(r)$ with $F(0)=0$ such that
$$
F(r)+W(r,u,x^0),
$$
where
$$
W(r,u,x^0)=r^{-n-2}\int_{B_r(x^0)}\frac{|\nabla u|^2}{2}+fu \,dx+2r^{-n-3}\int_{\dd B_r(x^0)}u^2 d\sigma,
$$
is a monotonically increasing function for all $x^0\in \Gamma\cap B_\frac12$ and $r<\frac12$.
\end{prop}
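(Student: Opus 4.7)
I would follow the classical Weiss monotonicity computation, adapted in the spirit of \cite{PS} to the Dini setting. Normalize $x^0=0$ and introduce the $2$-homogeneous rescaling $u_r(x):=r^{-2}u(rx)$, which solves $\Delta u_r = f_r\chi_{\{u_r\neq 0\}}$ with $f_r(x):=f(rx)$. A change of variables recasts $W(r,u,0)$ as a fixed integral over $B_1\cup\partial B_1$ in $(u_r,f_r)$, so the $r$-derivative reduces to a computation based on $\partial_r u_r = r^{-1}(x\cdot\nabla u_r - 2u_r)$.

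Next, differentiate and apply Green's identity. The essential point is that $u\in W^{2,\infty}$ by Theorem \ref{mainthm}, so $\nabla u = 0$ a.e.\ on $\{u=0\}$; the same therefore holds for $\partial_r u_r$ on $\{u_r=0\}$, and the indicator silently drops out of the Green's identity applied to $\int_{B_1}(\partial_r u_r)\,\Delta u_r\,dx$. With the coefficient of the $u^2$-boundary term chosen so that the cross contributions cancel, the interior and boundary pieces combine into the Weiss-type identity
$$\frac{d}{dr}W(r) \;=\; \frac{1}{r}\int_{\partial B_1}\bigl(u_{r,\nu}-2u_r\bigr)^2\,d\sigma \;+\; \int_{B_1}(\partial_r f_r)\,u_r\,dx,$$
in which only the last, $f$-dependent term obstructs monotonicity.

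Since $f$ is only Dini, $\partial_r f_r$ is not defined pointwise. I would split $f=f(0)+\tilde f$ with $|\tilde f(x)|\le\sigma(|x|)$: the $f(0)$ piece contributes only the perfect square (the classical constant-coefficient Weiss monotonicity), while for $\tilde f$ I rewrite the offending term as
$$\frac{d}{dr}\int_{B_1}\tilde f_r\, u_r\,dx \;-\; \int_{B_1}\tilde f_r\,(\partial_r u_r)\,dx.$$
The first summand is an exact $r$-derivative that I absorb into the functional; the second, using $|\tilde f_r|\le\sigma(r)$ on $B_1$ together with the quadratic bound $|u_r|,|\nabla u_r|\le C$ from Theorem \ref{mainthm}, is controlled in absolute value by $C\sigma(r)/r$. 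Setting
$$F(r) := -\int_{B_1}\tilde f_r\, u_r\,dx + \int_0^r\frac{C\,\sigma(s)}{s}\,ds,$$
one obtains a continuous function vanishing at $0$ (by $|\tilde f_r u_r|\le C\sigma(r)$ and the Dini integrability $\int_0^{1/2}\sigma(s)/s\,ds<\infty$) such that $\tfrac{d}{dr}(W+F)\ge 0$.

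The main technical hurdle is the non-smoothness of $f$: the classical argument demands a pointwise $\partial_r f_r$. This is precisely what the $r$-integration-by-parts trick overcomes by trading the nonexistent derivative for an absorbable exact derivative plus a Dini-controlled error. Rigorously, the manipulations can be justified by mollifying $f$ by $f_\varepsilon$ with moduli of continuity uniformly dominated by $\sigma$ and passing to the limit in the resulting monotonicity inequality, since the estimate $|R(r)|\le C\sigma(r)/r$ is stable under such approximation.
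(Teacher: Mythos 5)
Your derivation is correct and coincides in substance with what the paper does: the paper's proof simply cites the computation behind Theorem M in \cite{PS} and observes that, since Theorem \ref{mainthm} now gives genuine quadratic growth ($|u_r|+|\nabla u_r|\le C$ uniformly in $r$) rather than the $r^2\ln r$ growth available from a BMO bound alone, the error term is $C\sigma(r)/r$ instead of $C\sigma(r)\ln(1/r)/r$, so plain Dini integrability suffices to build $F$ --- which is exactly the point your estimate of $\int_{B_1}\tilde f_r\,\partial_r u_r\,dx$ isolates. One notational caveat: for the cross terms on $\partial B_1$ to assemble into the perfect square you write down, the $u^2$-boundary term must enter with coefficient $-r^{-n-3}$ rather than the $+2r^{-n-3}$ printed in the statement of the proposition; your derivation implicitly uses the correct normalization.
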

\begin{proof} The proof is actually contained in the proof of Theorem M in \cite{PS}. The only difference is that there, the authors do not know that their solution is $C^{1,1}$. They are working under the assumption that $D^2 u$ is merely in $BMO$, so that $u$ grows in a $r^2\ln r$ fashion away from the free boundary. This is what forces the $\ln $-Dini assumption. If one removes the $\ln$ from their definition of $F$ in (12) on page 8, one can use this very $F$ in our case.\end{proof}

As mentioned in for instance \cite{PS}, the $C^{1,1}$-regularity (cf Theorem \ref{mainthm}) allows us to define the function
$$
W(u,0^+,x^0):=\lim_{r\to 0}W(u,r,x^0),
$$
for $x^0\in \Gamma$, which is upper semi-continuous in the $x^0$ variable. Moreover, as in Definition 3 in \cite{PS}, 
$W(u,0^+,x^0)$ can only attain two different values, 
$A_n$ and $2A_n$, where $A_n$ is a dimensional constant.

\bibliographystyle{plain}
\bibliography{ObstRegL2.bib}

\end{document}